\documentclass[11pt, leqno]{amsart}

\usepackage[T1]{fontenc}
\usepackage{mathtools}
\usepackage{amssymb}
\usepackage{microtype}
\usepackage[numbers]{natbib}
\usepackage[colorlinks=true,pagebackref=false,linktocpage=true]{hyperref}
\usepackage[english]{babel}
\usepackage{enumitem}

=\oddsidemargin \font\teneufm=eufm10 \font\seveneufm=eufm7
\font\fiveeufm=eufm5
\newfam\eufmfam
\textfont\eufmfam=\teneufm \scriptfont\eufmfam=\seveneufm
\scriptscriptfont\eufmfam=\fiveeufm

\newtheorem{theoremAlpha}{Theorem}

\newtheorem{stheorem}{Theorem}[section]

\newtheorem{example}[stheorem]{Example}
\newtheorem{prop}[stheorem]{Proposition}
\newtheorem{lem}[stheorem]{Lemma}
\newtheorem{coro}[stheorem]{Corollary}
\newtheorem{thm}[stheorem]{Theorem}

\newcommand{\Kbar}{\overline{K}}
\newcommand{\Br}{\mathrm{Br}}
\newcommand{\GG}{\mathbb{G}}
\newcommand{\sdim}{\mathrm{{}_S dim}}
\newcommand{\kdim}{\mathrm{{}_K dim}}

\defcitealias{SGA3}{SGA3}
\newcommand{\sga}[1]{\citepalias[#1]{SGA3}}

\makeatletter
\def\@seccntformat#1{%
  \protect\textup{\protect\@secnumfont
    \ifnum\pdfstrcmp{subsection}{#1}=0 \bfseries\fi
    \csname the#1\endcsname
    \protect\@secnumpunct
  }%
}  
\let\c@subsection\c@stheorem

\makeatother

\begin{document}

\title[Conjecture I for unirational groups]{Conjecture I for unirational algebraic groups over imperfect fields}

\author[A. Lourdeaux]{Alexandre Lourdeaux}
\address{Alexandre Lourdeaux - SUSTech International Center for Mathematics, Southern University of Science and Technology, Shenzhen 518055, China}

\author[A. Zidani]{Anis Zidani}
\address{Anis Zidani - Institut de Mathématiques de Jussieu-Paris Rive Gauche, Sorbonne Université, Campus Pierre et Marie Curie,
4, place Jussieu, 75252 Paris Cedex 05,
France}

\date{\today}

\begin{abstract} 
Serre's Conjecture I states that the first Galois cohomology set of any smooth connected linear algebraic group is trivial over a perfect field of cohomological dimension at most 1. We prove that this result remains valid for any unirational algebraic group, dropping the perfection assumption on the field. To do so, we rely on the theory of pseudo-reductive groups, combined with the structure of unirational wound unipotent groups and the recent theory of permawound unipotent groups. Finally, we extend several related results on Galois cohomology associated with the Conjecture.
\end{abstract}

\maketitle

\medskip

\noindent{\bf Keywords:} Serre's Conjecture I, Algebraic groups, Unirationality, Unipotent groups, Pseudo-reductive groups, Galois cohomology, Torsors.

\medskip

\noindent{\bf MSC: 11E72, 20G15 (primary), 20G07 (secondary).}

\bigskip

\tableofcontents

\section{Introduction}

Let $K$ be a field of characteristic exponent $p$. We are interested in the vanishing of the first Galois cohomology sets $H^1(K,G)$ of linear algebraic $K$-groups $G$. In \cite[\S 2.4]{SerreConjI}, Serre formulated his Conjecture I: given a smooth connected linear algebraic $K$-group $G$, if $K$ is perfect and $\sdim(K) \leq 1$, then $H^1(K,G)$ is trivial. Here $\sdim(K) \leq 1$ means that the \emph{Serre dimension} of $K$ is at most $1$ (\cite[III.3.1]{CohoGalois}), {i.e.}, that the Brauer group of every finite extension of $K$ is trivial. The conjecture was proved by Steinberg in \cite[Thm.~I.9]{SteinbergProof}. In this paper, we establish a generalization of the conjecture.

\begin{theoremAlpha}[Thm.~\ref{theorem main result}]
\label{theorem h1 unirational}
Let $K$ be a field such that $\sdim(K) \leq 1$ and $[K:K^p] \leq p$, and let $G$ be a smooth connected linear algebraic group over $K$ which is $K$-unirational (i.e., there is a dominant {$K$-rational} map from an affine space to $G$). Then $H^1(K,G)=1$.
\end{theoremAlpha}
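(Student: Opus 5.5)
We will reduce to the two extreme cases, the pseudo-reductive one and the wound unipotent one, using the structure of $G$ over the imperfect field $K$. Let $U=\mathcal{R}_{u,K}(G)$ be the $K$-unipotent radical, so $G/U$ is pseudo-reductive. Inside $U$ we take the maximal $K$-split smooth connected normal unipotent subgroup $U_s$ (the split part, stable under $G$ because it is characteristic), so that $U/U_s$ is wound. Since $H^1(K,\cdot)$ of a split unipotent group vanishes over any field, and the same holds for all twists of $U_s$ (twists of split unipotent groups by inner forms stay split, being characteristic and filtered by $\GG_a$'s), the exact sequences in nonabelian cohomology reduce $H^1(K,G)=1$ to two statements:
\begin{enumerate}[label=(\roman*)]
\item $H^1(K,P)=1$ for pseudo-reductive $K$-groups $P$ when $\sdim(K)\le 1$ and $[K:K^p]\le p$;
\item $H^1(K,W)=1$ for wound unipotent $K$-groups $W$ which are quotients of unirational groups, hence unirational,
\end{enumerate}
together with the analogous statements for twisted forms that arise when passing to fibers. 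Unirationality passes to quotients, and twisting by a cocycle with values in $G(K_s)$ acting through conjugation preserves unirationality of the relevant normal subgroups, since the twisted group $_{c}U$ is again a quotient-type subquotient of the unirational group $_{c}G$. We will check this carefully, since fiber arguments require vanishing for all twists and not just for $U$ itself.

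For (i), we use the structure theory of Conrad--Gabber--Prasad. Under $[K:K^p]\le p$, pseudo-reductive groups are generalized standard: $P$ is built from Weil restrictions $R_{K'/K}(G')$ with $G'$ reductive (or basic exotic/non-reduced in characteristic $2,3$) over finite purely inseparable-then-separable extensions $K'$, modulo a commutative pseudo-reductive Cartan-type correction. Each $K'$ also has $\sdim\le 1$ and $[K':K'^p]\le p$. By Shapiro's lemma, $H^1(K,R_{K'/K}G')=H^1(K',G')$ for smooth $G'$, and this vanishes by Steinberg's theorem in the form valid for reductive groups over arbitrary fields of $\sdim\le1$ (Borel--Springer). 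For the commutative part, $H^1$ of a pseudo-reductive commutative group, i.e. one with no unipotent part, is handled via a torus-by-wound-unipotent filtration. Exotic cases are handled through their explicit descriptions, where the $p$-degree bound is precisely what makes the standard presentation available.

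For (ii), which we expect to be the main obstacle, we use the permawound theory. A unirational wound unipotent group over a field with $[K:K^p]\le p$ admits a filtration whose graded pieces are commutative $p$-torsion wound groups, each of the form $\{F=0\}$ for a reduced $p$-polynomial $F$. Such a group $W$ is $p$-torsion and is a quotient of a permawound group by a split group, and here unirationality should be the ingredient that makes it permawound. We then compute $H^1(K,W)=K/F(K^n)$ and show that this is $0$: surjectivity of a reduced $p$-polynomial on $K$ should follow from $\sdim(K)\le1$, for instance through the Brauer group vanishing applied to cyclic $p$-algebras $[a,b)$, which forces $K=\wp(K)+\dots$ in the relevant form when $[K:K^p]=p$. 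To pass from the commutative pieces to $W$ we do a dévissage, which requires the twisted pieces to remain of the same type; we would apply the permawound classification to each twist.
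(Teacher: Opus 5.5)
\textbf{There is a genuine gap: the unirationality of $R_{u,K}(G)$.} You assert that $U/U_s$ (and its twists) is unirational because it is a ``quotient-type subquotient'' of $G$. But $U=R_{u,K}(G)$ is a normal \emph{subgroup} of $G$, and unirationality does not pass to subgroups, normal or not. The group of Example~\ref{example unipotent h1 infini} is a smooth connected (normal) subgroup of the unirational group $\GG_a^2$; it is not unirational and has infinite $H^1$. So you cannot invoke (ii) for $U/U_s$, and without unirationality the vanishing you want is false. Proving that $R_{u,K}(G)$ is unirational is exactly Theorem~\ref{theorem unipotent radical unirational}, and it is the step where $[K:K^p]\le p$ is really needed. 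The paper passes to $G'=G/R_{us,K}(G)$ and uses that the Cartan subgroup $Z_{G'}(T')$ of the unirational group $G'$ is unirational (Rosengarten). This reduces the problem to $R_{u,K}$ of the commutative quasi-reductive unirational group $Z_{G'}(T')^{\mathrm{ab}}$, which is shown to be permawound and hence unirational. The pieces are then glued using that $R_{us,K}(G)$ is permawound and that extensions of unirational unipotent groups are unirational over such $K$. Nothing in your plan replaces this. Your concern about twisted forms, on the other hand, is unnecessary. If $H^1(K,N)$ and $H^1(K,G/N)$ are both trivial, exactness of $H^1(K,N)\to H^1(K,G)\to H^1(K,G/N)$ already gives $H^1(K,G)=1$.

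\textbf{Statement (i) is false as stated, and (ii) is only heuristic.} Example~\ref{contre exemple conjecture pseudo-reductif} is a commutative pseudo-reductive group over a field of Kato dimension at most $1$ with nontrivial $H^1$. Your ``torus-by-wound-unipotent filtration'' breaks down precisely at its wound quotient. You have to use that $P=G/R_{u,K}(G)$ is unirational. The paper first reduces to perfect $G$ via the unirational abelianization (Proposition~\ref{proposition h1 SolvableCase}). Then $P$ is pseudo-semisimple, a central quotient of some $\mathrm{R}_{L/K}(H)$ with basic fibers. The commutative correction disappears, and the only extra input is $H^2(K,Z)=0$ for the central kernel (Lemma~\ref{lemme h2 tame groups}). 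In (ii), the graded pieces must be chosen unirational (derived series and images of $p$-power maps, not kernels). Also, ``surjectivity of $F$ should follow from $\sdim(K)\le 1$'' is not an argument. The paper instead uses Achet's theorem: a unirational wound commutative unipotent group is a quotient of a product of groups $\mathrm{R}_{L/K}(\mathrm{R}_{K'/L}(\GG_m)/\GG_m)$. The $H^1$ of each such group embeds into the $p$-primary part of $\Br(L)$, and $H^2$ of unipotent groups vanishes. That step needs only $\sdim_p(K)\le 1$.
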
 

Since every smooth connected linear algebraic $K$-group is $K$-unirational when $K$ is perfect, Theorem \ref{theorem h1 unirational} indeed recovers Serre's Conjecture I.
This extension is motivated by the principle that the structure of $K$-unirational algebraic $K$-groups is essentially driven by purely inseparable Weil restrictions of reductive groups, making them amenable to study since Weil restriction commutes with Galois cohomology.
The condition that $\sdim(K) \leq 1$ \textit{and} $[K:K^p] \leq p$ corresponds to $K$ having \emph{Kato dimension} at most $1$ (\cite[\S0, Def.]{Kato_galois}), denoted here by $\kdim(K) \leq 1$. 

There exist smooth connected unipotent groups whose first Galois cohomology sets are not trivial over imperfect fields $K$ such that $\kdim (K) \leq 1$.

\begin{example} \label{example unipotent h1 infini}
Let $k$ be an algebraically closed field of characteristic $p>0$, and consider the field $K=k(t)$, which has Kato dimension at most $1$. The $K$-subgroup \[ U=\{ (x,y) \in \mathbb{G}_{a,K}^2 \; | \; x+tx^{p^2} +y^{p^2}=0 \} \] is smooth, connected, and unipotent since it is isomorphic to $\mathbb{G}_a$ over $K^{1/p^2}$. We have an isomorphism
\[ K/\{ x+tx^{p^2} +y^{p^2} \; | \; (x,y) \in K^2 \} \overset{\sim}{\to} H^1(K,U) \]
obtained from the cohomological sequence induced by the short exact sequence 
\[ 1 \to U \to \mathbb{G}_a^2 \to \mathbb{G}_a \to 1 . \] 
Since the classes of $at^{-1}$, for $a \in k$, are all different, it follows that $H^1(K,U)$ is infinite.
\end{example} 

Classical examples of smooth connected unipotent groups with non trivial $H^1$ sets as in Example \ref{example unipotent h1 infini} are never $K$-unirational. For instance, the group from Example \ref{example unipotent h1 infini} cannot be $K$-unirational since it has only one $K$-point while $K$ is infinite (see \cite[1991--1992, 2]{TitsPseudoRed}). In fact, from these classical examples, we can even create \emph{pseudo-reductive} groups that are not $K$-unirational and whose $H^1$ sets are not trivial. Recall that a pseudo-reductive $K$-group is a smooth connected linear algebraic $K$-group that has no nontrivial smooth connected unipotent normal subgroup over $K$ (see \cite[Def.~1.1.1]{conrad_gabber_prasad_2015}).

\begin{example}
\label{contre exemple conjecture pseudo-reductif}
Let $K$ and $U$ be as in Example \ref{example unipotent h1 infini}. By \cite[Cor.~9.5]{Totaro}, there exists a commutative pseudo-reductive group $G$ over $K$ such that $G/T = U$ where $T$ is the maximal torus of $G$. The group $G$ cannot be $K$-unirational since $U$ is not. Since $\sdim(K) \leq 1$, by \cite[Lem.~A.2]{Cesnavicius}, we have $H^1(K,T)=0$ and $H^2(K,T)=0$. Therefore ${H^1(K,G) =H^1(K,U)}$, which is not trivial.
\end{example} 

We do not know whether Theorem \ref{theorem h1 unirational} holds if the assumption $[K:K^p]\leq p$ is dropped. The classical extension of Serre's Conjecture I to imperfect fields with $\sdim$ instead of $\kdim$ restricts the focus to reductive groups, as Serre himself suggested (\cite[\S 2.4, Rmk.~2)]{SerreConjI}). This was established by Borel and Springer \cite[\S 8.6]{GeneralisationBorelSpringer} by adapting Steinberg's approach.
\medskip

The proof of Theorem \ref{theorem h1 unirational} is distributed over Sections \ref{section from unirational groups to perfect groups}, \ref{section from the perfect to the pseudo-semisimple case}, \ref{section pseudo-semisimple case}, and finalized in \ref{section bijection h1}. Overall, it essentially amounts to a reduction to the reductive case.

\begin{itemize}
\item Section \ref{section from unirational groups to perfect groups}. We show that we can reduce the problem from $K$-unirational groups to perfect groups, that is, groups equal to their own derived subgroups. The reduction requires only the assumption $\sdim(K) \leq 1$, and the triviality of the first Galois cohomology set then holds for unirational solvable groups, see Proposition \ref{proposition h1 SolvableCase}.
\item Section \ref{section from the perfect to the pseudo-semisimple case}. We prove that, when $[K:K^p] \leq p$, the \emph{unipotent $K$-radical} (see \S \ref{paragraphe pseudored}) of a $K$-unirational algebraic $K$-group is $K$-unirational as well, see Theorem \ref{theorem unipotent radical unirational}. This implies that the problem reduces to the case of \emph{pseudo-semisimple groups}, that is, perfect pseudo-reductive groups. We use here the work of Rosengarten on permawound groups from \cite{Rosengarten_Permawound, rosengarten2024rigidityunirationalgroups}.  
\item Section \ref{section pseudo-semisimple case}. We define \emph{torsion primes} for pseudo-semisimple groups and use the structure theory from \cite{conrad_gabber_prasad_2015} to prove a pseudo-semisimple analogue of Serre's Conjecture I via a reduction to the semisimple case (Theorem \ref{theorem conjecture I pseudo semisimple}). Here, we assume only that, for torsion primes $l$, the $l$-primary components of the Brauer groups vanish, as in the classical refinement of the semisimple case \cite[Thm.~5.2.5]{GilleConjII}. We nevertheless assume that $[K:K^p]\leq p$ whenever $p\in\{2,3\}$ due to the intricate nature of pseudo-semisimple groups in these characteristics.
\end{itemize} 

A general smooth algebraic group $G$ over $K$ has a largest $K$-unirational subgroup $G^{\mathrm{uni}}$, and when $[K:K^p] \leq p$, the group $G/G^{\mathrm{uni}}$ has no nontrivial $K$-unirational subgroups (see \S \ref{subsection strongly wound quotient}). We have:

\begin{theoremAlpha}[Thm.~\ref{corollary bijective h1 strongly wound}]
\label{theorem bijection h1 strongly wound quotient}
Let $K$ be a field such that $\kdim(K) \leq 1$ and $G$ be a (not necessarily connected nor linear) smooth algebraic group over $K$. Then the quotient morphism ${G \to G/G^{\mathrm{uni}}}$ induces a bijection $H^1(K,G) \overset{\sim}{\rightarrow}H^1(K,G/G^{\mathrm{uni}})$.
\end{theoremAlpha}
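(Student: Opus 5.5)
We write $H = G^{\mathrm{uni}}$ and $\pi\colon G \to G/H$ for the quotient map. The plan is to use the nonabelian exact sequence attached to $1 \to H \to G \to G/H \to 1$, together with twisting, to reduce both injectivity and surjectivity of $H^1(K,G)\to H^1(K,G/H)$ to statements about $H$ and its twisted forms. First we need some basic facts about $H$: it is normal (indeed characteristic) in $G$, it is smooth, connected and linear, and it is $K$-unirational. Connectedness holds because unirational varieties are geometrically irreducible. Linearity holds because a unirational group has no nontrivial abelian variety quotient, since any rational map from an affine space to an abelian variety is constant. Normality holds because $G(K_s)$-conjugates, and more generally Galois-twisted conjugates, of $H$ are again $K$-unirational subgroups defined over $K$, so they lie in $H$ by maximality.

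\emph{Injectivity.} Let $\alpha,\beta \in H^1(K,G)$ have the same image in $H^1(K,G/H)$. We twist by a cocycle $a$ representing $\alpha$. Then the fibre of $\pi_*$ through $\alpha$ is identified with the image of $H^1(K,{}_aH)\to H^1(K,{}_aG)$. Since $H$ is characteristic, the twist ${}_aH$ is the largest unirational subgroup of ${}_aG$, namely $({}_aG)^{\mathrm{uni}}$. It is therefore a smooth connected linear $K$-unirational group, so Theorem~\ref{theorem h1 unirational} gives $H^1(K,{}_aH)=1$. It follows that every fibre of $\pi_*$ is a single point. One point needs checking here: ${}_aH$ is obtained by inner twisting by a cocycle with values in $G$, not in $H$. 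To see that it stays unirational, we use the fact that ${}_aH$ is the maximal unirational subgroup of the $K$-form ${}_aG$. This holds because formation of $G^{\mathrm{uni}}$ commutes with separable base change (Galois descent of the maximal unirational subgroup, as in \S\ref{subsection strongly wound quotient}).

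\emph{Surjectivity.} Given $\gamma\in H^1(K,G/H)$, the obstruction to lifting it is a class in nonabelian $H^2(K,\mathcal H)$, where the band $\mathcal H$ is induced by the twisted form of $H$. We would avoid general gerbe theory as follows. Twist $G/H$ and $G$ by a cocycle $c$ representing $\gamma$, using the action of $G/H$ on the band of $H$ by outer automorphisms. Alternatively, reduce to the case where $H$ is commutative or is a pseudo-semisimple/semisimple group, using the dévissage of Sections~\ref{section from unirational groups to perfect groups}–\ref{section pseudo-semisimple case}: the derived series of $H$, the unipotent $K$-radical (unirational by Theorem~\ref{theorem unipotent radical unirational}), and the pseudo-semisimple quotient. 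Each successive quotient is unirational and characteristic. So by induction it suffices to treat the case where $H$ is either commutative unirational or pseudo-semisimple. In the commutative case the obstruction lies in $H^2(K,{}_cH)$. For unirational solvable groups this should vanish when $\kdim K\le1$: for tori this is \cite[Lem.~A.2]{Cesnavicius}, and for unirational unipotent groups we would use a filtration by quotients of Weil restrictions of $\GG_a$, which have trivial $H^2$ because $\GG_a$ is acyclic. In the pseudo-semisimple case, we pass via the structure theory of \cite{conrad_gabber_prasad_2015} to Weil restrictions of semisimple groups. Then Douai's theorem ($H^2$ of semisimple bands is neutral when $\sdim\le1$), combined with compatibility of Weil restriction with cohomology, gives neutrality of the gerbe.

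The main obstacle is surjectivity, specifically the vanishing of nonabelian $H^2$ for the relevant bands in the pseudo-semisimple and unirational unipotent cases, possibly in characteristics $2$ and $3$ where the structure theory is exotic. If a uniform $H^2$ argument fails, the fallback is a concrete lifting: choose a $K_s$-point lift and correct the cocycle using the triviality of $H^1$ of twisted forms, via a Borovoi-type "abelianization" argument.
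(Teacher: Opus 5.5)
Your injectivity argument is the paper's. You twist by a cocycle $a$ representing $\alpha$, note that ${}_aG^{\mathrm{uni}}$ is a $K_s$-form of $G^{\mathrm{uni}}$ and hence still $K$-unirational, and apply Theorem~\ref{theorem h1 unirational}. The unirationality step is \cite[Thm.~1.6]{rosengarten2024rigidityunirationalgroups}, which is what your base-change claim rests on.

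Surjectivity, however, is not proved. Your dévissage of $G^{\mathrm{uni}}$ into characteristic pieces is legitimate, and the commutative step works. There the obstruction lies in $H^2(K,{}_cA)$, which vanishes for every commutative smooth connected linear $A$ by \cite[Lem.~A.2]{Cesnavicius} and \cite[Lem.~3.3]{TossiciVistoli}. Unirationality is irrelevant for this, and wound unirational groups are not quotients of Weil restrictions of $\GG_a$ anyway.

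Everything then rests on the pseudo-semisimple piece $P$, where you only assert that gerbes banded by the outer action on $P$ are neutral. Douai's theorem concerns semisimple bands. To transport it you would have to do three things, none of which you carry out:
\begin{enumerate}
\item lift the outer action through the central quotient $\mathrm{R}_{L/K}(H)\to P$ of \S\ref{ClassificationPseudoSS};
\item control the central kernel of that quotient, which need not be smooth (e.g.\ $\mathrm{R}_{K'/K}(\mu_p)$), so you leave the setting of Galois-cohomological gerbes;
\item treat separately the basic exotic and basic nonreduced groups in characteristics $2$ and $3$, which are not Weil restrictions of semisimple groups at all.
\end{enumerate}
Also, your first suggestion, twisting $G$ by a cocycle of $G/H$, is not defined, since $G/H$ does not act on $G$. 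The Borovoi-type fallback is not an argument.

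The missing idea is that surjectivity has nothing to do with $G^{\mathrm{uni}}$ and needs no nonabelian $H^2$. By Corollary~\ref{theorem surjective h1 strongly wound}, every surjection of smooth $K$-groups is surjective on $H^1$ as soon as $\sdim(K)\leq 1$. This follows from Theorem~\ref{theorem surjective h1}: every homogeneous space is dominated by a torsor. The paper proves that by adapting Serre's compatible-pair argument:
\begin{enumerate}
\item A stabilizer of minimal dimension is forced to be solvable. One replaces it by the normalizer of a pseudo-Borel subgroup (these are $L(K_s)$-conjugate and self-normalizing), and by normalizers of Sylow subgroups of the component group.
\item It is also forced to be perfect, using only the abelian vanishing $H^2(K,C)=0$ of \cite[Cor.~A.4]{Cesnavicius}.
\end{enumerate}
Being solvable and perfect, it is trivial, and the map $a$ is then a $1$-cocycle of $G$ giving the required torsor.
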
 

Theorem \ref{theorem bijection h1 strongly wound quotient} is a consequence of Theorem \ref{theorem h1 unirational} and of an improvement of \cite[III.2.4, Thm.~3]{CohoGalois} which is only stated for {perfect} fields. In Section \ref{section bijection h1}, we check that we can adapt the proof of \textit{loc.~cit.}~by using the notion of \emph{pseudo-Borel subgroups} from \cite{conrad_prasad_2016} instead of Borel subgroups.
\begin{theoremAlpha}[Thm.~\ref{theorem surjective h1}]
\label{theorem surjective h1 intro}
Let $K$ be a field such that $\sdim(K) \leq 1$ and let $G$ be a (not necessarily linear) smooth algebraic group over $K$. Then every $G$-homogeneous space $X$ is dominated by a $G$-torsor.
\end{theoremAlpha}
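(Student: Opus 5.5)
We follow Serre's proof of \cite[III.2.4, Thm.~3]{CohoGalois}, replacing Borel subgroups by the pseudo-Borel subgroups of \cite{conrad_prasad_2016}. Let $X$ be a $G$-homogeneous space, pick $\bar x\in X(K_s)$, and let $H\subset G_{K_s}$ be its stabilizer. We must find a $K$-form of the situation: a $G$-torsor $P$ with a $G$-equivariant $K$-morphism $P\to X$. Equivalently, we need a point $x\in X(K_s)$ whose stabilizer $H_x$ contains a $K$-defined subgroup $L$ such that the class in $H^2$ obstructing the lift vanishes. The strategy is to enlarge the problem until it becomes a question about a solvable or unirational-solvable group, where $\sdim(K)\le 1$ suffices.

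\textbf{Step 1: reduction to $G$ linear and connected.} By Chevalley, $G$ has a smooth connected normal linear subgroup $G_1$ with $G/G_1$ an extension of an abelian variety by a finite étale group; more precisely, we use the structure theory for smooth groups over arbitrary fields. We then consider $X/G_1$, a homogeneous space under $G/G_1$. For abelian varieties and finite groups one argues directly. Torsors under an abelian variety $A$ dominating $A$-homogeneous spaces are the homogeneous spaces themselves, since stabilizers are normal. For finite étale groups one uses $\sdim(K)\le1$, i.e.\ $\mathrm{cd}_l(K)\le 1$ for $l\neq p$, together with $\mathrm{cd}_p\le1$ always in characteristic $p$. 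This lets one lift the Springer class, in the style of Serre's $\mathrm{cd}\le1$ argument. We then reduce to the fibers, which are homogeneous under inner forms of $G_1$. Twisting lets us handle those.

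\textbf{Step 2: the connected linear case.} Since $\sdim(K)\le1$, Springer's gerbe-theoretic criterion reduces the problem to the vanishing of a nonabelian $H^2$-class with values in the band of $H$. Following Serre, we pass to the unipotent-free part. If $G$ is linear connected, choose a minimal pseudo-parabolic, or better a pseudo-Borel subgroup $B$ of $G$. This is a $K$-subgroup that is solvable with $G/B$ proper, and after a separable extension $B$ exists over $K$ by \cite{conrad_prasad_2016}; the point is that $G/B$ has a $K$-point. Then $X\times G/B$, or rather the fibration $X\to X/B$-type construction, reduces to a homogeneous space under the solvable $K$-group $B$. Concretely, $B$ acts on $X$ with some orbit defined over $K$, obtained from the properness of $G/B$ via a fixed point argument on the variety $X\times^G (G/B)$. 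The key input is that $B\cap H_x$ then yields a $B$-homogeneous space $Y\subset X$. For a solvable smooth connected $B$, one filters by a composition series with quotients tori, $\mathbb{G}_a$'s or wound unipotent groups. Tori have trivial $H^2$ obstruction since $\mathrm{cd}$ conditions give $H^2(K,T)=0$ (\cite[Lem.~A.2]{Cesnavicius}). Unipotent quotients have $H^2=0$ since $\mathrm{cd}_p(K)\le1$. Hence the gerbe is neutral and $Y$ is dominated by a $B$-torsor $Q$, and $P=Q\times^B G$ is the desired $G$-torsor.

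\textbf{Main obstacle.} The delicate step is producing the $K$-rational $B$-stable piece $Y$. In the perfect case Serre uses that $H_x$ contains a Borel-stable structure via Borel's fixed point theorem. Here $G$ need not be pseudo-reductive, and pseudo-Borel subgroups are only defined for pseudo-reductive quotients. So I would first quotient by $\mathcal{R}_{u,K}(G)$, which is unipotent with trivial $H^2$ obstructions, and work in the pseudo-reductive quotient. There I would apply the conjugacy and rationality of pseudo-Borel subgroups after passing to $K_s$ and descending. The check that the abelian-kernel $H^2$ obstruction vanishes for the unipotent radical without perfectness ($\mathrm{cd}_p K\le 1$ holds for all fields of characteristic $p$) should carry the argument through.
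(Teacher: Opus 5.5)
Your proposal has genuine gaps: Step 2 relies on a $K$-rational pseudo-Borel subgroup of $G$, which is not available, and a $K$-rational $B$-orbit in $X$ that is never constructed. Step 1's reduction is circular.

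\textbf{Step 2.} It rests on a pseudo-Borel subgroup $B\subset G$ \emph{defined over $K$}, i.e.\ on a $K$-point of $G/B$. You do not get this ``after a separable extension''. Pseudo-Borel subgroups exist over $K_s$, but having one over $K$ is exactly the quasi-splitness of $G$, which is Theorem~\ref{theorem pseudo-borel}. That is a much deeper statement: its proof uses Theorem~\ref{theorem conjecture I pseudo semisimple}, and it is established only under the extra hypothesis $[K:K^p]\leq p$ when $G/R_{u,K}(G)$ is non-standard. Here only $\sdim(K)\leq 1$ is assumed.

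Even granting $B$, your ``fixed point argument'' for a $K$-rational $B$-orbit $Y\subset X$ does not work. Over $K_s$, the $B$-orbits in $X$ correspond to orbits of the stabilizer $H$ on the proper variety $G/B$. Borel's fixed point theorem needs a connected solvable acting group, which $H$ need not be. You also need a $\Gamma$-stable orbit, but in general there are infinitely many such orbits permuted by $\Gamma$, and you give no reason for one to be stable.

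\textbf{Step 1.} There are three further problems.
\begin{enumerate}
\item Over imperfect fields there is no Chevalley decomposition: the quotient of $G^0$ by its largest smooth connected linear subgroup is only a pseudo-abelian variety.
\item A homogeneous space under a commutative group $A$ is a torsor under $A/S$, where $S$ is the stabilizer, not under $A$. Lifting it costs a class in $H^2(K,S)$.
\item Going from $X/G_1$ back to $X$ requires lifting a $G/G_1$-torsor to a $G$-torsor, i.e.\ surjectivity of $H^1(K,G)\to H^1(K,G/G_1)$. That is Corollary~\ref{theorem surjective h1 strongly wound}, a consequence of the very statement you are proving, so the reduction is circular.
\end{enumerate}

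\textbf{The missing idea.} You never need rational structure on $G$. Following \cite[III.2.4, Thm.~3]{CohoGalois}, the paper works with compatible pairs $(H,a)$:
\begin{itemize}
\item $H\subset G_{K_s}$ is a smooth subgroup fixing $x_0\in X(K_s)$;
\item $a\colon\Gamma\to G(K_s)$ is continuous with ${}^s x_0=x_0\cdot a_s$, $a_s\,{}^s a_t\,a_{st}^{-1}\in H(K_s)$ and $a_s\,{}^s H\,a_s^{-1}=H$;
\item $H$ is taken minimal.
\end{itemize}

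Let $L$ be the largest smooth connected linear subgroup of $H^0$. Pseudo-Borel subgroups $B$ of $L$ exist over $K_s$ and are $L(K_s)$-conjugate. So you can correct $a_s$ by some $h_s\in L(K_s)$ with $a_s\,{}^s B\,a_s^{-1}=h_s^{-1}Bh_s$, giving the compatible pair $(N_H(B)^{\mathrm{sm}},ha)$. Minimality and the self-normalizing property of pseudo-Borel subgroups force $L=B$. Since $H^0/L$ is a commutative pseudo-abelian variety, $H^0$ is solvable.

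The same trick with Sylow subgroups makes $H/H^0$, and hence $H$, solvable. Finally, $H^2(K,C)=0$ for the $K$-descent $C$ of $H^{\mathrm{ab}}$ (by {\v C}esnavi{\v c}ius, using $\sdim(K)\leq 1$). This lets you replace $H$ by $\mathcal{D}(H)$, so $H$ is perfect and solvable, hence trivial, and $a$ is a $1$-cocycle.

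Only conjugacy over $K_s$ and vanishing of $H^2$ for commutative $K$-groups enter. That is why no quasi-splitness, no bound on $[K:K^p]$, and no separate dévissage for nonlinear or disconnected $G$ is needed.
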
 

Finally, over a field $K$ such that $\sdim(K) \leq 1$, the classical proof of Serre's Conjecture I and for its version for reductive groups (\cite{SteinbergProof, GeneralisationBorelSpringer}) shows at the same time that every reductive group over $K$ is {quasi-split}, meaning it admits a Borel subgroup over $K$, that is, a solvable parabolic subgroup. In general, a smooth connected linear group is said to be \textit{quasi-split} if it has a {pseudo-Borel} subgroup over $K$ (see \S \ref{pseudo-borels} for the definition). We get a generalization of this result as a consequence of Theorem \ref{theorem conjecture I pseudo semisimple} and using results on \emph{pseudo-inner forms} from \cite{conrad_prasad_2016}.

\begin{theoremAlpha}[Thm.~\ref{theorem pseudo-borel}]
\label{theorem quasisplit intro}
 Let $K$ be a field such that $\sdim(K) \leq 1$, assuming further that $\kdim(K)\leq1$ if $\mathrm{char}(K)\in\{2,3\}$. Then every smooth connected linear $K$-group is quasi-split.
\end{theoremAlpha}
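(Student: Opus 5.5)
The natural strategy is to imitate the classical proof that a reductive group over a field of Serre dimension at most $1$ is quasi-split, by combining Conjecture I with the theory of forms. First we reduce to the pseudo-semisimple case. Let $G$ be a smooth connected linear $K$-group and $R_{u,K}(G)$ its unipotent $K$-radical. Pseudo-parabolic, and hence pseudo-Borel, subgroups of $G$ contain $R_{u,K}(G)$ and correspond bijectively to those of the pseudo-reductive quotient $G/R_{u,K}(G)$ (Conrad--Gabber--Prasad). So we may assume $G$ is pseudo-reductive. Write $G=\mathcal{D}(G)\cdot C$ with $C$ a Cartan subgroup. A pseudo-Borel subgroup of $\mathcal{D}(G)$ is of the form $B\cap \mathcal{D}(G)$ for a pseudo-Borel $B$ of $G$, and conversely it generates one together with $C$. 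Hence we may assume $G$ is pseudo-semisimple.

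Next we use pseudo-inner forms as in Conrad--Prasad. Every pseudo-semisimple $G$ is a pseudo-inner form of a quasi-split pseudo-semisimple group $G_0$. More precisely, there is a class $\xi\in H^1(K, G_0^{\mathrm{ad}})$, where $G_0^{\mathrm{ad}}$ denotes the relevant ``pseudo-adjoint'' group $G_0/Z_{G_0}$ (or the image of $G_0$ in its automorphism scheme), such that $G\simeq {}_\xi G_0$. I would then show that this class comes from $H^1(K,G_0)$, or from a central quotient to which the pseudo-semisimple Conjecture I (Theorem \ref{theorem conjecture I pseudo semisimple}) applies. Then $\xi$ is trivial, so $G\simeq G_0$ is quasi-split.

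The key step is proving the vanishing of $H^1(K,G_0/Z)$. The quotient $G_0/Z_{G_0}$ is again pseudo-semisimple; for the structure theory in characteristics $2,3$ this is where the hypothesis $\kdim(K)\le 1$ is needed. Theorem \ref{theorem conjecture I pseudo semisimple} requires only that $\Br$ vanish at torsion primes, which holds since $\sdim(K)\le1$ kills all Brauer groups of finite extensions. Therefore $H^1(K,G_0/Z_{G_0})=1$, and the pseudo-inner twist is trivial.

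The main obstacle is making the pseudo-inner twisting precise. Conrad--Prasad show that pseudo-inner forms are classified by $H^1(K,G_0/Z_{G_0})$, and that the quasi-split pseudo-inner form exists and is unique. One must check that their parametrizing group is indeed a pseudo-semisimple (or at least smooth connected) group to which Theorem \ref{theorem conjecture I pseudo semisimple} applies. If it is only a Weil restriction of an adjoint semisimple group, I would instead invoke the Borel--Springer theorem directly via $H^1(K,R_{K'/K}(H))=H^1(K',H)$. The reason this works is that $K'$ is a finite extension of $K$, so $\sdim(K')\le1$ as well.
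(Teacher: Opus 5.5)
Your overall strategy (twist by a quasi-split pseudo-inner form, then show the twisting class is trivial) is the paper's, and your preliminary reduction to $\mathcal{D}(G)$ is harmless. The gap is in the first step, which you assume rather than prove. You take for granted that every pseudo-semisimple group has a quasi-split pseudo-inner form, and you attribute this to Conrad--Prasad. For pseudo-reductive groups this is not automatic. \cite[\S C.2.9]{conrad_prasad_2016} only reduces it to the vanishing of an obstruction in $H^2$ of a certain commutative pseudo-reductive $K$-group. The paper proves this step separately (Proposition \ref{proposition quasi split pseudo inner form}). It is the first place where $\sdim(K)\le 1$ is used, through \cite[Cor.~A.4]{Cesnavicius}.

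The second step also fails as written, because you twist through the wrong group. Pseudo-inner forms of $G_0$ are twists by $(\mathrm{Aut}^{\mathrm{sm}}_{G_0/K})^0\cong (G_0\rtimes Z_{G_0,C})/C$. They are not twists by $G_0/Z_{G_0}$, nor by the image of $G_0$ in its automorphism scheme. The correct group is in general strictly larger and not perfect. For example, take $p=2$, $K'/K$ purely inseparable of degree $2$, and $G_0=\mathrm{R}_{K'/K}(\mathrm{SL}_2)$. Then $(\mathrm{Aut}^{\mathrm{sm}}_{G_0/K})^0$ contains $\mathrm{R}_{K'/K}(\mathrm{PGL}_2)$, of dimension $6$. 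The image of $G_0$ is $\mathrm{R}_{K'/K}(\mathrm{SL}_2)/\mathrm{R}_{K'/K}(\mu_2)$, which has dimension $5$.

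Consequences:
\begin{itemize}
\item Triviality of $H^1(K,G_0/Z_{G_0})$ would not kill the twisting class.
\item Theorem \ref{theorem conjecture I pseudo semisimple} does not apply directly to the correct group.
\item Your Borel--Springer fallback could at best handle standard groups. It says nothing about the exotic and nonreduced cases in characteristics $2,3$. Those are exactly where $[K:K^p]\le p$ is needed, and it enters through Theorem \ref{theorem conjecture I pseudo semisimple}, not through any property of $G_0/Z_{G_0}$.
\end{itemize}

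What is missing is a dévissage along the central subgroup $C$, embedded via $c\mapsto (c^{-1},\mathrm{int}(c))$. It needs three inputs:
\begin{itemize}
\item $H^1(K,G_0)=1$, from Theorem \ref{theorem conjecture I pseudo semisimple};
\item $H^1(K,Z_{G_0,C})=1$, from the proof of \cite[Prop.~C.2.8]{conrad_prasad_2016}, using that $G_0$ is quasi-split;
\item $H^2(K,C)=0$, from \cite[Cor.~A.4]{Cesnavicius}.
\end{itemize}
Your argument supplies only the first.
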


\vspace{\baselineskip}

\noindent{\bf Acknowledgments.}${}$

The authors are deeply indebted to Philippe Gille for proposing this collaboration and for helpful discussions. They are also very grateful to K{\k e}stutis {\v C}esnavi{\v c}ius for his careful reading of the paper, and they warmly thank Zev Rosengarten for valuable discussions concerning Section \ref{section from the perfect to the pseudo-semisimple case}. Finally, they wish to thank Yong Hu who enabled the second author to visit the first author at SUSTech in Shenzhen. A version of Theorem \ref{theorem conjecture I pseudo semisimple} was also conjectured by Yong Hu.

The second author was supported by the project ``Group schemes, root systems, and related representations'' founded by the European Union - NextGenerationEU through Romania’s National Recovery and Resilience Plan (PNRR) call no.~PNRR-III-C9-2023-I8, Project CF159/31.07.2023, and coordinated by the Ministry of Research, Innovation
and Digitalization (MCID) of Romania.

\section{Notation, overview of used notions}
\label{section notation overview}

\subsection{Basic notation and conventions}
\begin{itemize}
    \item $K$ shall denote a field of characteristic exponent $p$. We fix an algebraic closure $\overline{K}$ of $K$, and call $K_s$ the separable closure of $K$ in $\overline{K}$. Set $\Gamma \coloneqq \mathrm{Gal}(K_s/K)$.
    \item By an algebraic $K$-group, we mean a group scheme over $K$ of finite type.
    \item For any finite field extension $L/K$, we write $\mathrm{R}_{L/K}(G)$ for the Weil restriction of an algebraic $L$-group $G$ (see \cite[App.~A.5]{conrad_gabber_prasad_2015}).
    \item $\mathcal{D}(G)$ denotes the derived subgroup of a smooth algebraic $K$-group $G$. We also write $G^\mathrm{ab}$ for $ G/\mathcal{D}(G)$.
    \item We adopt the convention that reductive groups are connected.
\end{itemize}

\subsection{Dimension of a field}
For any $l$ which is a prime or $1$\footnote{The inclusion of the case $l=1$ is a lightweight convention introduced solely to uniformize various propositions in the case $p=1$.}, write $\sdim_l(K) \leq 1$ if for all finite field extensions $L/K$, the $l$-torsion subgroup of the Brauer group $\Br(L)$ of $L$ is trivial.  Write $\sdim (K) \leq 1$ if $\sdim_l (K) \leq 1$ for all primes $l$. This is the Serre ($l$-)dimension of $K$. Write $\kdim (K) \leq 1$, meaning that $K$ has Kato dimension at most $1$, if $\sdim(K) \leq 1$ and $[K:K^p] \leq p$. It is clear that for any finite field extension $L/K$, if $\sdim_l(K) \leq 1$ for some prime $l$, then $\sdim_l(L) \leq 1$. Since $[K:K^p]\leq p$ implies $[L:L^p]\leq p$, we also have $\kdim(L) \leq 1$ for all finite extensions $L/K$ if $\kdim (K) \leq 1$.

In \cite[Def.~4.5.1]{GilleConjII}, Gille defines the notion of \emph{separable cohomological $l$-dimension}, but being of separable cohomological $l$-dimension at most $1$ and being of Serre $l$-dimension at most $1$ are equivalent by \cite[II.2.2, Cor.~2]{CohoGalois}. This enables us to cite the results of \cite{GilleConjII} with assumptions on Serre dimension being at most $1$.

\subsection{Unirational groups}\label{paragraphUnirat}
A finite type $K$-scheme $X$ is called \emph{($K$-)unirational} if there is a schematically dominant $K$-rational map $\mathbb{A}_K^n \dashrightarrow X$, that is, there exists a nonempty open subscheme $U \subset \mathbb{A}_K^n$ and a regular map $U \to X$ which does not factor through any proper closed subscheme of $X$. With this general definition of unirationality, if $G$ is a unirational algebraic $K$-group, then $G$ is smooth, connected, and linear, as explained at the beginning of the proof of \cite[Thm.~7.9]{rosengarten2024rigidityunirationalgroups}. This enables us to simply write ``unirational algebraic group'' instead of ``unirational smooth connected linear algebraic group''. By \cite[Thm.~1.6]{rosengarten2024rigidityunirationalgroups}, $K$-unirationality for algebraic $K$-groups can be checked over a separable extension.

\subsection{Split and wound unipotent groups}
A smooth connected unipotent $K$-group $U$ is called \emph{split} if it admits a composition series whose successive quotients are isomorphic to $\mathbb{G}_{a,K}$. It is called \emph{($K$-)wound} (see \cite[Def.~B.2.1]{conrad_gabber_prasad_2015}) if every scheme map from the affine line $\mathbb{A}_K^1$ into $U$ is constant to some $K$-point of $U$. By \cite[Prop.~B.3.2]{conrad_gabber_prasad_2015}, a smooth connected unipotent $K$-group $U$ is wound if, and only if, it has no central subgroup isomorphic to the additive group $\mathbb{G}_a$. In general, there exists a unique smooth connected normal split unipotent $K$-subgroup $U_\mathrm{split}$ of $U$ such that $U/U_\mathrm{split}$ is wound (this is \cite[Thm.~B.3.4]{conrad_gabber_prasad_2015}).

\subsection{Quasi-reductive, pseudo-reductive, and pseudo-semisimple groups}
\label{paragraphe pseudored}
A \emph{pseudo-reductive $K$-group} (resp.~\emph{quasi-reductive $K$-group}) is a smooth connected linear algebraic group $G$ over $K$ which does not admit any nontrivial smooth connected normal unipotent (resp.~$K$-split) $K$-subgroup. A \emph{pseudo-semisimple} $K$-group is a perfect pseudo-reductive {$K$-group}. In general, for a smooth connected linear algebraic $K$-group $G$, we define its \emph{unipotent $K$-radical} ${R}_{u,K}(G)$ (resp.~\emph{split unipotent $K$-radical} ${R}_{us,K}(G)$) as its largest smooth connected normal (resp.~$K$-split) unipotent $K$-subgroup. The quotient $G/{R}_{u,K}(G)$ (resp.~$G/{R}_{us,K}(G)$) is always pseudo-reductive (resp.~quasi-reductive) and there is a natural extension \[1 \to {R}_{u,K}(G) \to G \to G/{R}_{u,K}(G) \to 1 ,\] so that the study of linear algebraic groups splits roughly into the study of unipotent groups and of pseudo-reductive groups. Note that ${R}_{u,K}(G)_{\mathrm{split}}={R}_{us,K}(G)$ by \cite[Cor.~B.3.5]{conrad_gabber_prasad_2015}.

\subsection{The largest smooth subgroup of an algebraic $K$-group}
\label{largestsmsubgrp}
By \cite[Lem.~C.4.1 and Rmk.~C.4.2]{conrad_gabber_prasad_2015}), any algebraic $K$-group $G$ admits a largest smooth subgroup $G^\mathrm{sm}$. It satisfies $G^\mathrm{sm}(K')=G(K')$ for all separable extensions $K'/K$. It is functorial in $G$ and commutes with separable base change.

\subsection{The subgroup \boldmath{$\mathcal{D}^\infty(G)$}}
If $G$ is a smooth connected algebraic $K$-group, there exists an integer $n \geqslant 0$ such that $\mathcal{D}^n(G)$ is perfect for dimension reasons. In other words, the sequence $(\mathcal{D}^k(G))_{k \geqslant 0}$ stabilizes at some point. We denote by $\mathcal{D}^\infty(G)$ the group $\mathcal{D}^n(G)$.

\subsection{Cohomology}
Given any algebraic $K$-group $G$, $H^i(K,G)$ stand for the \textit{fppf} $i$-th cohomology set/group of $G$. If $G$ is smooth, $H^i(K,G)$ is identified with the étale, or Galois cohomology set/group.
Recall that, given a finite field extension $K'/K$ and an algebraic $K'$-groups $G'$, the canonical map $H^1(K,\mathrm{R}_{K'/K}(G')) \to H^1(K',G')$ is injective by \sga{XXIV, Prop.~8.2}. It is moreover bijective when $G'$ is smooth by \sga{XXIV, Rmk.~8.5}.

\section{From unirational groups to perfect groups}
\label{section from unirational groups to perfect groups}

The goal of this section is to reduce the proof of the triviality of $H^1(K,G)$ for unirational algebraic $K$-groups $G$ to the case where $G$ is perfect, assuming that $\sdim(K) \leq 1$. In particular, $H^1(K,G)$ is trivial for unirational solvable groups $G$. We achieve this reduction by first treating commutative groups.

\begin{prop}
\label{proposition h1 commutative unipotent}
Let $U$ be a unirational commutative unipotent algebraic group over $K$. \linebreak If $\sdim_p (K) \leq 1$, then we have $H^1(K,U)=1$.
\end{prop}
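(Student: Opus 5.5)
We plan to reduce to the case of groups killed by $p$ and then treat that case by presenting $U$ as a cokernel of a map between vector groups, where unirationality gives surjectivity on $K$-points. If $p=1$ there is nothing to prove: unipotent groups are split, so $H^1(K,U)=1$. Assume $p>1$. By dévissage along $p$-power multiplication we can reduce to $p$-torsion. Multiplication by $p$ is a homomorphism $U\to U$ whose image $pU$ is a smooth connected commutative unipotent subgroup; it is unirational as a quotient of $U$. The cokernel $U/pU$ is killed by $p$ and is unirational as a quotient of $U$. Using the long exact sequence for $1\to pU\to U\to U/pU\to1$ (the groups are commutative, so it is an exact sequence of groups) and induction on dimension, or on the exponent of $U$, it suffices to treat the case where $U$ is killed by $p$. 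For this to work we must check that quotients of unirational groups stay unirational, which is immediate from the definition (compose the dominant map), and that $pU$ is unirational, which follows by the same argument.

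Now let $U$ be smooth connected commutative and $p$-torsion. By the classical structure theorem (\cite[Prop.~B.1.13]{conrad_gabber_prasad_2015} and Tits' theory), $U$ embeds as the kernel of a $p$-polynomial. More precisely, there is an exact sequence
\[ 1 \to U \to \mathbb{G}_a^{n} \xrightarrow{F} \mathbb{G}_a^{m} \to 1 \]
with $F$ given by $p$-polynomials. Alternatively, and more conveniently for cohomology, there is an isogeny-type presentation $1\to U\to V\to W\to 1$ with $V,W$ vector groups, as in Example \ref{example unipotent h1 infini}. Since $H^1(K,\mathbb{G}_a^n)=0$, we get $H^1(K,U)\cong \mathrm{coker}(F:K^n\to K^m)$, exactly as in Example \ref{example unipotent h1 infini}. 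So the task is to show that $F(K^n)=K^m$. For $m=1$, this means that the $p$-polynomial $F$ is surjective on $K$-points.

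To get surjectivity we use $\sdim_p(K)\le1$ together with unirationality. For a nonzero $c\in K^m$, the fiber $F^{-1}(c)$ is a $U$-torsor $X$, and we must show $X(K)\neq\emptyset$. Here is the key step and the main obstacle. A standard route is to reduce, via induction on $m$ (filtering $W$ by $\mathbb{G}_a$'s and passing to the corresponding subgroups of $U$, which are again unirational by the same quotient trick), to $m=1$. Then $U$ is the zero locus of a reduced $p$-polynomial $F(x_1,\dots,x_n)=\sum_i\sum_j a_{ij}x_i^{p^j}$. Unirationality of $U$ should force the principal part of $F$ to have a ``split'' form. In Tits' language, a wound group of this type has finitely many $K$-points in a bounded region, whereas unirational groups have Zariski-dense $K$-points. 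So $U$ has a large split part, and modulo it $U$ is isomorphic to something whose $p$-polynomial has a linear factor over $K$ in a suitable sense. To solve $F(x)=c$ we then consider the degree-$p^r$ separable polynomial $F(\lambda x)$ in one variable along a suitable line direction $x$. The field $L$ generated by a root is separable over $K$, and the obstruction to descending the solution back to $K$ lies in a norm-type cokernel. The condition $\sdim_p(K)\le1$ is exactly what kills this obstruction: the $p$-part of the Brauer group vanishes over finite extensions, which gives surjectivity of the relevant norm and trace maps on $p$-polynomial values, via the usual Artin–Schreier/Kummer-type identification $\Br(K)[p]\cong \Omega^1_K/(\dots)$ (Kato–Milne). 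So the plan for the hard step is as follows. First, translate unirationality into the existence of a $K$-dense family of $K$-points, which yields a parametrization $\mathbb{A}^1\dashrightarrow U$ along which $F$ restricts to a one-variable additive polynomial in normalized form. Second, show that the vanishing of $\Br(L)[p]$ for finite $L/K$ implies that every one-variable separable additive polynomial surjects onto $K$ after restricting scalars, using $H^1(K,\mu)$-type computations for the finite étale kernel together with $H^2(K,-)$ vanishing. I expect this identification of the Brauer obstruction, that is, linking $\Br(L)[p]=0$ with surjectivity of the additive polynomial, to be the genuine difficulty.
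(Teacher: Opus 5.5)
There is a genuine gap. Your preliminary reductions are fine: the dévissage along $pU\subset U$ works, and so does the identification $H^1(K,U)\cong K/F(K^{n+1})$ coming from a presentation $0\to U\to\mathbb{G}_a^{n+1}\xrightarrow{F}\mathbb{G}_a\to 0$. The step that is supposed to use unirationality, however, rests on two false claims.

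\emph{Unirationality does not force a split part.} Let $K'/K$ be purely inseparable of degree $p$. Then $\mathrm{R}_{K'/K}(\mathbb{G}_m)/\mathbb{G}_m$ is commutative and killed by $p$. It is unirational, being a quotient of an open subscheme of $\mathbb{A}^p$. It is also $K$-wound: a $\mathbb{G}_a$ inside it would lift to a $\mathbb{G}_a$ inside the commutative pseudo-reductive group $\mathrm{R}_{K'/K}(\mathbb{G}_m)$. So $U_{\mathrm{split}}=0$ for this group. Tits-type finiteness, as in Example~\ref{example unipotent h1 infini}, concerns particular wound groups, not wound groups in general.

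\emph{$\sdim_p(K)\le 1$ does not make one-variable separable additive polynomials surjective.} Take $K=k(t)$ with $k$ algebraically closed, so $\sdim(K)\le 1$ by Tsen. The map $x\mapsto x^p-x$ is not surjective on $K$, because its cokernel is $H^1(K,\mathbb{Z}/p\mathbb{Z})\neq 0$. In general the cokernel of such a polynomial is $H^1$ of a finite étale group, which the Brauer group does not control. The Kato--Milne description of $\Br(K)[p]$ gives no surjectivity statement of this kind. Since you also concede that you have no mechanism linking $\Br(L)[p]=0$ to surjectivity, the argument proves nothing beyond the split case. (Separately, subgroups of unirational groups need not be unirational, so the ``same quotient trick'' does not justify your induction on $m$. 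That induction is unnecessary anyway, given the one-equation presentation.)

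The missing idea is twofold. Unirationality must be exploited through a structure theorem, and the Brauer group enters through $\mathbb{G}_m$ and Hilbert~90 rather than through Artin--Schreier theory. Your ``norm-type cokernel'' intuition is close, but the relevant norms come from purely inseparable extensions on multiplicative groups. The paper's argument runs as follows.
\begin{enumerate}
\item Reduce to the wound case via $U_{\mathrm{split}}$, using $H^1(K,U_{\mathrm{split}})=1$.
\item By Achet's result, a unirational wound commutative unipotent group is a quotient of a product $W$ of groups $\mathrm{R}_{L/K}(\mathrm{R}_{K'/L}(\mathbb{G}_m)/\mathbb{G}_m)$, where $L$ is the separable closure of $K$ in $K'$.
\item The kernel $V$ of $W\to U$ is commutative unipotent, so $H^2(K,V)=0$ by Tossici--Vistoli. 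Hence $H^1(K,W)\to H^1(K,U)$ is surjective, and it suffices to treat a single factor.
\item For a factor, Shapiro's lemma together with Hilbert~90 for $\mathrm{R}_{K'/L}(\mathbb{G}_m)$ gives an injection $H^1(L,\mathrm{R}_{K'/L}(\mathbb{G}_m)/\mathbb{G}_m)\hookrightarrow \Br(L)$.
\item The image is $p$-primary torsion because the group is unipotent, so it vanishes by $\sdim_p(K)\le 1$.
\end{enumerate}
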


\begin{proof}
The map $U\rightarrow U/U_{\mathrm{split}}$ induces an exact sequence in cohomology
\[ {H^1(K,U_{\mathrm{split}})} \to {H^1(K,U)} \to {H^1(K,U/U_{\mathrm{split}})} .\] 
Since $H^1(K,U_{\mathrm{split}})$ is trivial, we just need to prove that $H^1(K,U/U_{\mathrm{split}})$ is trivial, hence we may assume that $U$ is $K$-wound. The group $U$ being $K$-wound and unirational, it is by \cite[Prop.~2.5]{achet:hal-02358528} a quotient of a group $W$ which is a product of groups of the form 
\begin{equation}
\label{equation prop h1 commutative unipotent}
    \mathrm{R}_{L/K}\left( \mathrm{R}_{K'/L}(\GG_{m,K'})/\GG_{m,L} \right)
\end{equation} 
for a finite field extension $K'/K$ and where $L$ is the separable closure of $K$ in $K'$. Let us denote by $V$ the kernel of $W\to U$. Since $W$ is unipotent, so is $V$. We have an exact sequence 
\[ {H^1(K,W)} \to {H^1(K,U)} \to {H^2(K,V)} \]
where $H^2(K,V)$ is trivial by \cite[Lem.~3.3]{TossiciVistoli}. To prove that $H^1(K,W)$ is trivial, it suffices to consider a group $W_0$ of the form \eqref{equation prop h1 commutative unipotent}. On the one hand, we have $$H^1(K,W_0)=H^1(L, \mathrm{R}_{K'/L}(\GG_{m})/\GG_{m}), $$ and on the other hand, we have $H^1(L, \mathrm{R}_{K'/L}(\GG_{m}))=H^1(K',\GG_m)= 0$ by Hilbert's Theorem 90. Moreover, the long exact sequence associated with the quotient $\mathrm{R}_{K'/L}(\GG_{m})/\GG_{m}$ is 
\[\underset{0}{\underbrace{{H^1(L,\mathrm{R}_{K'/L}(\GG_{m}))}}} \to {H^1(L,\mathrm{R}_{K'/L}(\GG_{m})/\GG_{m})} \to {H^2(L,\GG_m)},\]
and the $p$-primary component of $H^2(L,\GG_m)=\mathrm{Br}(L)$ is trivial since $\sdim_p(K) \leq 1$, while $H^1(L,\mathrm{R}_{K'/L}(\GG_{m})/\GG_{m})$ is $p$-torsion as $\mathrm{R}_{K'/L}(\GG_{m})/\GG_{m}$ is unipotent. Therefore, $H^1(L,\mathrm{R}_{K'/L}(\GG_{m})/\GG_{m})$ is trivial, and consequently, so is $H^1(K,W_0)$. We thus find that $H^1(K,W)$, and hence $H^1(K,U)$, are trivial.
\end{proof}

\begin{prop}
\label{proposition h1 commutative uni}
Let $C$ be a unirational commutative algebraic group over $K$. \linebreak If ${\sdim (K) \leq 1}$, then we have $H^1(K,C)=0$.
\end{prop}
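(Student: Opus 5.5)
We will reduce to Proposition~\ref{proposition h1 commutative unipotent} and to the case of tori. A smooth connected commutative linear algebraic $K$-group $C$ has a unique maximal $K$-torus $T$, and the quotient $U \coloneqq C/T$ is a smooth connected commutative unipotent $K$-group. This is the structure theorem for commutative groups over arbitrary fields; $T$ is defined over $K$ because the maximal torus of a commutative group is unique, hence Galois-stable. The exact sequence $0\to T\to C\to U\to 0$ gives an exact sequence of groups
\[ H^1(K,T) \to H^1(K,C) \to H^1(K,U). \]
It therefore suffices to show that $H^1(K,T)=0$ and $H^1(K,U)=0$.

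For the torus, we use that $\sdim(K)\le 1$ forces $H^1(K,T)=0$ for every $K$-torus $T$. This is the standard fact already cited in Example~\ref{contre exemple conjecture pseudo-reductif} (\cite[Lem.~A.2]{Cesnavicius}). It also follows from the classical argument: split $T$ by a finite Galois extension, embed $T$ into a quasi-trivial torus via a coflasque-type resolution, and use Hilbert 90 together with the vanishing of Brauer groups of finite extensions of $K$. We simply invoke the cited lemma.

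For the unipotent quotient, $U$ is a quotient of $C$. Composing the dominant rational map $\mathbb{A}^n_K\dashrightarrow C$ with the faithfully flat map $C\to U$ gives a dominant rational map to $U$, so $U$ is unirational. Since $\sdim_p(K)\le 1$ holds in particular, Proposition~\ref{proposition h1 commutative unipotent} gives $H^1(K,U)=0$. When $p=1$, $U$ is split (indeed a vector group), so the vanishing is immediate. The exact sequence then yields $H^1(K,C)=0$.

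The only delicate point is justifying that the maximal torus is normal with unipotent quotient over an imperfect $K$, that is, that $C/T$ is unipotent. Since $C_{\Kbar}$ is a commutative connected linear group, it is $T_{\Kbar}\times U'$. The maximal torus $T_{K_s}$ descends to $K$ because it is unique, and the quotient $C/T$ is unipotent because it becomes unipotent over $\Kbar$. Note that the torus part needs no unirationality hypothesis, since tori are always unirational and in any case the cited vanishing holds for all tori. So the hypothesis on $C$ matters only through the unipotent quotient.
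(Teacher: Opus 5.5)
Your proposal is correct and follows the paper's proof: dévissage along $0\to T\to C\to C/T\to 0$, vanishing of $H^1(K,T)$ for tori when $\sdim(K)\le 1$, and Proposition~\ref{proposition h1 commutative unipotent} applied to the unirational quotient $C/T$; the only difference is that for the torus the paper cites \cite[Prop.~4.6.2]{GilleConjII} instead of \cite[Lem.~A.2]{Cesnavicius}. Your aside on a ``classical argument'' for tori is unnecessary and, as phrased, would not give the vanishing on its own, since an embedding of $T$ into a quasi-trivial torus only identifies $H^1(K,T)$ with a cokernel of $K$-points; this is harmless because your proof actually rests on the citation.
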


\begin{proof}
Let $T$ be the maximal torus of $C$. Set $U\coloneqq C/T$. By dévissage, $H^1(K,C)$ is trivial if both $H^1(K,T)$ and $H^1(K,U)$ are. By \cite[Prop.~4.6.2]{GilleConjII}, $H^1(K,T)$ is trivial. Regarding $H^1(K,U)$, it is trivial due to Proposition \ref{proposition h1 commutative unipotent} since it is unirational as a quotient of $C$.
\end{proof}

\begin{lem}\label{DGUnirat}
    Let $G$ be a unirational algebraic group over $K$. Then $\mathcal{D}(G)$ is unirational.
\end{lem}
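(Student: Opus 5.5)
The derived subgroup $\mathcal{D}(G)$ is, by definition, generated by the image of the commutator morphism $c\colon G\times G\to G$, $(x,y)\mapsto xyx^{-1}y^{-1}$. The plan is to exhibit $\mathcal{D}(G)$ as the image of a product of copies of $G\times G$ under a morphism of $K$-schemes built from $c$ and multiplication, and then to pull back a dominant rational map from affine space.

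\textbf{Step 1.} By the standard theory of subgroups generated by images of geometrically integral varieties (e.g.\ \cite[Prop.~A.2.11]{conrad_gabber_prasad_2015} or the classical Borel argument), there is an integer $n\geq 1$ with the following property. The morphism
\[ \mu_n\colon (G\times G)^{n}\to G,\qquad (x_i,y_i)_i\mapsto \prod_{i=1}^n c(x_i,y_i) \]
factors through $\mathcal{D}(G)$ and is surjective onto it. Moreover, $\mu_n$ is dominant, hence schematically dominant, onto $\mathcal{D}(G)$, since $\mathcal{D}(G)$ is smooth and connected, hence integral. Here we use that $G$ is smooth and connected (see \S\ref{paragraphUnirat}). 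Then $(G\times G)^n$ is geometrically integral and $c$ maps the identity to the identity, so the classical argument applies.

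\textbf{Step 2.} Since $G$ is unirational, there is a dominant rational map $\phi\colon\mathbb{A}^m_K\dashrightarrow G$, defined on a dense open $U$. Then $\phi^{4n}$, hmm, more precisely $\phi^{2n}\colon U^{2n}\to (G\times G)^n$, is dominant from the dense open $U^{2n}\subset \mathbb{A}^{2nm}_K$. Indeed, a product of dominant morphisms of varieties is dominant; one can check this over $\overline{K}$, where it follows by considering generic points or images of products of dense constructible sets. Composing gives a morphism $\mu_n\circ\phi^{2n}\colon U^{2n}\to\mathcal{D}(G)$. A composition of dominant morphisms is dominant, and since the target is reduced, dominance implies schematic dominance. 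Hence $\mathcal{D}(G)$ is unirational.

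\textbf{Main obstacle.} The only delicate point is Step~1: making sure that $\mathcal{D}(G)$ is really the image of a finite product of commutator maps, rather than merely generated by it as an fppf sheaf. We also need to work scheme-theoretically over a possibly imperfect $K$. I would handle this by citing the CGP result on groups generated by a family of maps from geometrically integral varieties passing through the identity; it gives exactly the surjectivity of $\mu_n$ for $n$ large. One could alternatively avoid it: dominance of $\mu_n$ onto $\mathcal{D}(G)$ already suffices for unirationality, and dominance follows from the stabilization of the increasing chain of closures of the images of $\mu_n$ by dimension reasons.
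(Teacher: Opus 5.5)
Your proposal is correct and follows the paper's proof. The paper likewise takes a surjective morphism $(G\times_K G)^N\to\mathcal{D}(G)$ given by a product of commutator maps (citing SGA3, Exp.~VIB, Prop.~7.4, where you cite CGP or Borel) and composes it with the dominant rational map from affine space that comes from the unirationality of $G$; your Step~2 simply spells out the dominance check that the paper leaves implicit.
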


\begin{proof}
    By \sga{VIB, Prop.~7.4}, there is a surjective scheme map $(G \times_K G)^N \to \mathcal{D}(G)$, which is a product of the commutator map $G \times_K G \to \mathcal{D}(G)$. Hence the result since $G$ is unirational.
\end{proof}

\begin{prop}
\label{proposition h1 SolvableCase}
Let $G$ be a unirational algebraic group over $K$. If $\sdim (K) \leq 1$, then the inclusion $\mathcal{D}^\infty(G)\subset G$ induces a surjection
 $$H^1(K,\mathcal{D}^\infty(G)) \rightarrow H^1(K,G).$$
In particular, $H^1(K,G)$ is trivial if $G$ is further assumed to be solvable.
\end{prop}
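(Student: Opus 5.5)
We argue by induction along the derived series of $G$. By Lemma~\ref{DGUnirat}, applied repeatedly, every term $\mathcal{D}^k(G)$ is unirational. Hence it suffices to prove one step: for any unirational $G$ with $\sdim(K)\leq 1$, the inclusion $\mathcal{D}(G)\subset G$ induces a surjection $H^1(K,\mathcal{D}(G))\to H^1(K,G)$. Composing these surjections for $G\supset \mathcal{D}(G)\supset \mathcal{D}^2(G)\supset\cdots\supset\mathcal{D}^\infty(G)$ then gives the claim. If $G$ is solvable, then $\mathcal{D}^\infty(G)=1$, so $H^1(K,G)$ is trivial.

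For the single step, consider the exact sequence
\[1\to \mathcal{D}(G)\to G\to G^{\mathrm{ab}}\to 1.\]
Here $\mathcal{D}(G)$ is smooth, connected and normal, and $G^{\mathrm{ab}}$ is a smooth connected commutative group. Moreover, $G^{\mathrm{ab}}$ is unirational: composing a dominant rational map $\mathbb{A}^n_K\dashrightarrow G$ with the faithfully flat quotient map $G\to G^{\mathrm{ab}}$ gives a dominant rational map to $G^{\mathrm{ab}}$. Proposition~\ref{proposition h1 commutative uni} therefore gives $H^1(K,G^{\mathrm{ab}})=0$. The sequence above yields an exact sequence of pointed sets
\[H^1(K,\mathcal{D}(G))\to H^1(K,G)\to H^1(K,G^{\mathrm{ab}}),\]
so every class of $H^1(K,G)$ has trivial image in $H^1(K,G^{\mathrm{ab}})$.

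Exactness only says that the fibre over the \emph{trivial} class is the image of $H^1(K,\mathcal{D}(G))$. That fibre is exactly what we need, since every class lies over the trivial class. So $H^1(K,\mathcal{D}(G))\to H^1(K,G)$ is surjective, and no twisting argument is needed at this stage.

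The only point needing care is unirationality of quotients and derived subgroups. Unirationality of $G^{\mathrm{ab}}$ follows as above because $G\to G^{\mathrm{ab}}$ is surjective, hence schematically dominant. Unirationality of $\mathcal{D}(G)$, needed to iterate the step, is exactly Lemma~\ref{DGUnirat}. Both are already available, so the argument is a short dévissage.
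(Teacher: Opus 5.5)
Your proposal is correct and follows the paper's proof: use the exact sequence for $G \to G^{\mathrm{ab}}$ and the vanishing $H^1(K,G^{\mathrm{ab}})=0$ (Proposition~\ref{proposition h1 commutative uni}, since $G^{\mathrm{ab}}$ is unirational) to get surjectivity of $H^1(K,\mathcal{D}(G))\to H^1(K,G)$. Then iterate along the derived series using Lemma~\ref{DGUnirat}.
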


\begin{proof}
    The exact sequence induced by $G\rightarrow G^\mathrm{ab}$ induces an exact sequence in cohomology: \[ {H^1(K,\mathcal{D}(G))} \to  {H^1(K,G)} \to {H^1(K,G^\mathrm{ab})} .\]
    The group $G^\mathrm{ab}$ is unirational as $G$ is, so by Proposition \ref{proposition h1 commutative uni} the group $H^1(K,G^\mathrm{ab})$ is trivial, thus $H^1(K,\mathcal{D}(G)) \rightarrow H^1(K,G)$ is surjective. Since $\mathcal{D}(G)$ is unirational by Lemma \ref{DGUnirat}, it then suffices to iterate the process to get the result.
\end{proof}

\begin{coro}
Let $G$ be a unirational $K$-group and $C$ be a Cartan subgroup of $G$. \linebreak If $\sdim (K) \leq 1$, then $H^1(K,C)$ is trivial.
\end{coro}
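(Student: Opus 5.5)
The plan is to show that $C$ is itself a unirational algebraic group and then apply Proposition~\ref{proposition h1 SolvableCase}. A Cartan subgroup of a smooth connected linear group is commutative up to its unipotent part; more precisely, it is nilpotent. Indeed, by \cite{conrad_gabber_prasad_2015} (see also \sga{XII}), $C = Z_G(T)$ for a maximal $K$-torus $T$ of $G$. It is smooth, connected and nilpotent, hence solvable. So once unirationality of $C$ is established, the "in particular" part of Proposition~\ref{proposition h1 SolvableCase} gives $H^1(K,C)=1$ under $\sdim(K)\leq 1$.

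The key step is therefore the unirationality of $C=Z_G(T)$. I would use the classical fact that the morphism
\[ G \times C \to G,\quad (g,c)\mapsto gcg^{-1} \]
is dominant, since the union of the conjugates of a Cartan subgroup is dense in $G$. Dominance alone does not transfer unirationality from $G$ to $C$, however: it goes in the wrong direction. So I would instead argue via a rational section, as follows.

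The regular semisimple locus gives a rational map $G \dashrightarrow \{\text{maximal tori}\}$, $g\mapsto$ the centralizer torus of the semisimple part of $g$. The variety of maximal tori is $G/N_G(T)$, and the variety of Cartan subgroups is $G/N_G(C)$. Using the $K$-unirationality of $G$ and \cite[Thm.~1.6]{rosengarten2024rigidityunirationalgroups}, which lets us check unirationality over $K_s$, one may assume $K$ is separably closed. Then the generic fibre argument of Borel--Springer (the map $G\times C\to G$ is smooth at generic points, with fibres isomorphic to $N_G(C)$-torsors over $C$) shows that $C\times G/C \to G$ is generically étale and birational onto an open subset up to the finite group $N_G(C)/C$.

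From this I would deduce that $C$ is dominated by a rational image of $G$. Concretely, I would try to find a rational retraction $G \dashrightarrow C$ using the decomposition $g = $ (conjugating element)$\cdot c \cdot$(conjugating element)$^{-1}$ after base change to the generic point. A cleaner route is to use Rosengarten's result, also from \cite{rosengarten2024rigidityunirationalgroups}, that Cartan subgroups of unirational groups are unirational (I believe this is stated there, alongside the fact that $T\cdot R_{u,K}$-type subgroups inherit unirationality). I expect this to be the main obstacle. If Rosengarten's result is not directly citable, the fallback is the standard fact that $C/T$ is unipotent with $C$ generated by $T$ and the image of a rational map from the Lie-theoretic open cell. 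That gives unirationality of $C$ via the product map $T\times(\text{unirational unipotent})\to C$.

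Once $C$ is known to be unirational and solvable, Proposition~\ref{proposition h1 SolvableCase} finishes the proof.
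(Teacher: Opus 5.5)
Your argument is exactly the paper's: the result you hoped to cite is \cite[Prop.~7.12]{rosengarten2024rigidityunirationalgroups} (Cartan subgroups of unirational groups are unirational), nilpotence of $C$ is \sga{XII, Thm.~6.6.c)}, and Proposition~\ref{proposition h1 SolvableCase} then gives $H^1(K,C)=1$. Drop your fallbacks, though, because neither works: the conjugation map $G\times C\to G$ has no rational section in general (for $\mathrm{SL}_2$ a section would rationally choose an eigenvalue of the generic matrix); and over imperfect $K$ the Cartan subgroup $C=\mathrm{R}_{K'/K}(\GG_m)$ of $\mathrm{R}_{K'/K}(\mathrm{SL}_2)$, with $K'/K$ purely inseparable, has no nontrivial smooth connected unipotent subgroup, so $C$ is not dominated by $T$ times a unipotent subgroup; the rational retraction you were after is instead the projection onto the middle factor of the open cell $U_G(-\lambda)\times Z_G(\lambda)\times U_G(\lambda)\subset G$ of \cite[Prop.~2.1.8]{conrad_gabber_prasad_2015} over $K_s$, for a cocharacter $\lambda$ of $T_{K_s}$ generic enough that $Z_G(\lambda)=Z_G(T)$.
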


\begin{proof}
     Since $G$ is unirational, so is $C$ by \cite[Prop.~7.12]{rosengarten2024rigidityunirationalgroups}. Moreover, $C$ is nilpotent (see \sga{XII, Thm.~6.6.c)}), hence solvable. It now remains to apply Proposition \ref{proposition h1 SolvableCase} to $C$. 
\end{proof}

\section{From the perfect to the pseudo-semisimple case}
\label{section from the perfect to the pseudo-semisimple case}

The goal of this section is to prove that for any unirational smooth connected linear algebraic $K$-group $G$, the group ${R}_{u,K}(G)$ is unirational when $[K:K^p]\leq p$. This is done by first reducing to the case of a quasi-reductive commutative algebraic $K$-group, and then using the theory of permawound unipotent $K$-groups developed by Rosengarten in \cite{Rosengarten_Permawound} to deal with this case. Consequently, assuming moreover that $G$ is perfect and that $\kdim(K)\leq 1$, the triviality of $H^1(K,G)$ reduces to the case where $G$ is pseudo-semisimple. Indeed, the unirationality of ${R}_{u,K}(G)$ implies that $H^1(K,{R}_{u,K}(G))=1$ by Proposition \ref{proposition h1 SolvableCase}.

\begin{prop}
\label{proposition extension unipotent par tore unirational}
Let $G$ be a unirational commutative quasi-reductive algebraic $K$-group. Assume that $[K:K^p] \leq p$. Then ${R}_{u,K}(G)$ is unirational.
\end{prop}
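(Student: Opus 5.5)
The plan is to reduce to the structure of $U \coloneqq {R}_{u,K}(G)$ inside the commutative group $G$. Since $G$ is commutative and quasi-reductive, $U$ is a commutative unipotent group with $U_{\mathrm{split}} = {R}_{us,K}(G) = 1$, so $U$ is $K$-wound. Let $T$ be the maximal torus of $G$. Then $G/U$ is a commutative pseudo-reductive group, and there is an exact sequence $1 \to U \to G \to G/U \to 1$. The quotient $G/T$ is unipotent and $K$-unirational, being a quotient of $G$. The goal is to transfer unirationality from $G/T$, or from $G$ itself, down to the wound subgroup $U$.

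The key input will be Rosengarten's theory of permawound groups \cite{Rosengarten_Permawound, rosengarten2024rigidityunirationalgroups}. Over a field with $[K:K^p]\le p$, the relevant facts I would use are these.
\begin{enumerate}[label=(\roman*)]
\item A wound commutative unipotent group is unirational if and only if it is permawound, i.e.\ it is "ubiquitous".
\item The class of permawound groups is stable under extensions and quotients.
\item Every smooth connected unipotent $K$-subgroup of a unirational group, arising as the unipotent part of a commutative extension, interacts well with this class.
\end{enumerate}
Concretely, I would consider the map $U \to G/T$. Its kernel $U\cap T$ is finite, and its image $\overline U$ sits in an exact sequence $1\to \overline U \to G/T \to G/(T\cdot U)\to 1$. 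Here $G/(TU)$ is a quotient of the pseudo-reductive group $G/U$ by the image of $T$. First, $G/T$ is unirational unipotent, hence permawound in Rosengarten's sense, since unirational implies permawound when $[K:K^p]\le p$. Second, I would show that $G/(TU)$ is wound. This should follow because a $\mathbb{G}_a$ inside it would lift to a split unipotent subgroup of $G/U$ modulo its torus, contradicting pseudo-reductivity via the standard description of commutative pseudo-reductive groups. Third, I would apply Rosengarten's result that the kernel of a surjection from a permawound group onto a wound group is again permawound, up to passing to the smooth connected part. This gives that $\overline U$ is permawound, hence unirational. Finally, $U\to\overline U$ is an isogeny with finite kernel, and I would lift unirationality. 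For this I would rather run the argument directly with $G \to G/U$, using that $G$ is unirational, and view $U$ as the kernel of a map from a unirational group onto a group with no unirational unipotent part.

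The main obstacle I expect is exactly this last step: deducing unirationality of a kernel from unirationality of the source. In general, kernels of maps between unirational groups need not be unirational, and this is where $[K:K^p]\le p$ is essential. The most promising tool is Rosengarten's rigidity theorem for unirational groups \cite{rosengarten2024rigidityunirationalgroups}. If it is unavailable in the needed form, I would fall back on the permawound characterization: $U$ is wound, and $U$ fits into $1\to U\to G\to G/U\to1$ with $H^1$ and $\mathrm{Ext}$ computations. I would then try to show that $U$ satisfies the permawound extension property, namely that every extension of $\mathbb{G}_a$ by $U$ splits off appropriately. This would be shown by pushing such extensions through $G$ and using the fact that commutative pseudo-reductive groups over such $K$ are, up to isogeny, quotients of Weil restrictions $\mathrm{R}_{K'/K}(T')$, as in Totaro's and Conrad–Gabber–Prasad's descriptions.
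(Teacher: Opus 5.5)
Your concrete chain in the third paragraph is exactly the paper's proof. $G/T$ is unirational, and wound by a splitting argument applied to $G$ and its quasi-reductivity, hence permawound by \cite[Prop.~9.6]{rosengarten2024rigidityunirationalgroups}. Next, $G/(T\cdot{R}_{u,K}(G))$ is wound. Here the lifting you describe comes from the fact that an extension of a split unipotent group by a torus splits (\sga{XVII, Thm.~6.1.1}), not from the structure of commutative pseudo-reductive groups. A nontrivial split piece would therefore yield a split unipotent subgroup of the pseudo-reductive group $G/{R}_{u,K}(G)$. Finally, the kernel of a permawound group onto a wound one is permawound \cite[Prop.~5.5 and Prop.~6.9]{Rosengarten_Permawound}, and permawound implies unirational \cite[Prop.~9.7]{rosengarten2024rigidityunirationalgroups}.

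The step you single out as the main obstacle is vacuous. ${R}_{u,K}(G)\cap T$ is a subgroup scheme that is both unipotent and of multiplicative type, hence trivial, not merely finite. So ${R}_{u,K}(G)$ maps isomorphically onto its image in $G/T$; the paper writes ${R}_{u,K}(G)\cong p({R}_{u,K}(G))$. Nothing has to be lifted through an isogeny, and the speculative fallback in your last paragraph can be dropped.
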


\begin{proof}
Let $T$ be the maximal torus of $G$. Set $U \coloneqq G/T$ and  $p\coloneqq G\rightarrow U$. By \sga{XVII, Thm.~6.1.1.A)ii)}, the extension $p^{-1}(U_{\mathrm{split}})$ has a group section $s\colon U_{\mathrm{split}}\rightarrow G$, so that $s(U_{\mathrm{split}})\subset {R}_{us,K}(G)=1$. Hence $U$ is $K$-wound. Note that $G/R_{u,K}(G)$ is a pseudo-reductive extension of $U/p(R_{u,K}(G))$ by $T$. So, the same reasoning shows that $U/p(R_{u,K}(G))$ is {$K$-wound}. Also, $U$ is unirational as a quotient of $G$. Hence, $U$ is permawound by \cite[Prop.~9.6]{rosengarten2024rigidityunirationalgroups}. By \cite[Prop.~5.5 and Prop.~6.9]{Rosengarten_Permawound}, the group $R_{u,K}(G)\cong p(R_{u,K}(G))$ is permawound, hence unirational by \cite[Prop.~9.7]{rosengarten2024rigidityunirationalgroups}.
\end{proof}

\begin{prop}
\label{ReductionCasQREdComm}
Let $G$ be a unirational algebraic $K$-group. Set $G'\coloneqq G/R_{us,K}(G)$ 
and take $T'$, a maximal torus of $G'$. Then ${Z_{G'}(T')}^\mathrm{ab}$ is a unirational commutative quasi-reductive algebraic $K$-group, and we have the exact sequence:
\[1 \to U \to {R}_{u,K}(G) \to {R}_{u,K}({Z_{G'}(T')}^\mathrm{ab}) \to 1,\]
where $U$ is an unirational unipotent algebraic $K$-group.
\end{prop}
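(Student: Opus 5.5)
First I will show that $G'$ and $C\coloneqq Z_{G'}(T')$ are unirational and quasi-reductive. The group $G'$ is a quotient of $G$, so it is unirational. It is quasi-reductive by the standard fact that $G/R_{us,K}(G)$ has trivial split unipotent $K$-radical: the preimage in $G$ of a split normal subgroup of $G'$ is an extension of split groups, hence split. By \cite[Prop.~7.12]{rosengarten2024rigidityunirationalgroups}, the Cartan subgroup $C$ of $G'$ is unirational. Then $C^{\mathrm{ab}}$ is a commutative unirational quotient of $C$.

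For quasi-reductivity, the plan is to use the structure theory of \cite{conrad_gabber_prasad_2015}. When $G'$ is pseudo-reductive, Cartan subgroups are commutative and pseudo-reductive \cite[Prop.~1.2.4]{conrad_gabber_prasad_2015}. In the quasi-reductive case I would argue as follows. Any $K$-split smooth connected unipotent subgroup of $C^{\mathrm{ab}}$ pulls back to a subgroup of $C$ that is normalized by $T'$ and split modulo $\mathcal{D}(C)$. One then uses the dynamic description $G'=U_{G'}(\lambda^{-1})\cdot Z_{G'}(\lambda)\cdot U_{G'}(\lambda)$ for generic cocharacters $\lambda$ of $T'$, and the fact that root groups generate, to produce a normal split unipotent subgroup of $G'$. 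Quasi-reductivity of $G'$ then forces that subgroup to be trivial.

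Next I would build the exact sequence. Let $\pi\colon G\to G'$, and let $q\colon C\to C^{\mathrm{ab}}$. I would first compare $R_{u,K}(G')=R_{u,K}(G)/R_{us,K}(G)$ with $C$. Since $R_{u,K}(G')$ is normal and unipotent, I would use $R_{u,K}(G')=Z_{R_{u,K}(G')}(T')\cdot(\text{root-group part})$. The root-group part is generated by the $T'$-weight spaces, which are split, being images of $\mathbb{G}_a$-type orbit maps under $T'$-conjugation; hence they lie in a unirational split-by-construction subgroup. So $R_{u,K}(G')\cap C=Z_{R_{u,K}(G')}(T')$ is the unipotent radical of $C$. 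I would then show that $q$ maps it onto $R_{u,K}(C^{\mathrm{ab}})$, with kernel contained in $\mathcal{D}(C)$. Composing $R_{u,K}(G)\to R_{u,K}(G')\to R_{u,K}(C^{\mathrm{ab}})$ gives a surjection, which I would define via a conjugacy-invariant retraction, or via the natural map $G'\supset R_{u,K}(G')\cdot C \to C^{\mathrm{ab}}$. Its kernel $U$ is generated by $R_{us,K}(G)$ (split, hence unirational), the weight root groups of $R_{u,K}(G')$ for nonzero weights (unirational as images of affine spaces), and $\mathcal{D}(C)\cap R_{u,K}(G')$. I would show unirationality of the last piece by taking it inside $\mathcal{D}(C)$, which is unirational by Lemma \ref{DGUnirat}, and noting that it is a commutator image of unipotent pieces. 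An extension of unirational groups is unirational, since unirationality descends along smooth surjections with unirational kernels over a separable base by \cite[Thm.~1.6]{rosengarten2024rigidityunirationalgroups}. Hence $U$ is unirational.

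The main obstacle is the kernel $U$. Specifically, I need that the $T'$-fixed part of $R_{u,K}(G')$ modulo $\mathcal{D}(C)$ captures exactly the wound obstruction, and that the intersection with $\mathcal{D}(C)$ is unirational rather than merely smooth connected. I would first try to prove $U\cap C\subset\mathcal{D}(C)\cdot R_{us}$ directly using the commutator surjection of \sga{VIB, Prop.~7.4} restricted to $C\times R_{u,K}(C)$.
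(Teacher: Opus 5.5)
Your overall shape matches the paper's: the composite $R_{u,K}(G)\to R_{u,K}(G')\to R_{u,K}(C^{\mathrm{ab}})$, with a kernel built from $R_{us,K}(G)$ and $\mathcal{D}(C)$. But you are missing the key structural input, so you never pin down the kernel.

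Since $G'$ is quasi-reductive, $R_{u,K}(G')$ is wound, because $R_{u,K}(G')_{\mathrm{split}}=R_{us,K}(G')=1$. A torus acts trivially on a wound group \cite[Prop.~B.4.4]{conrad_gabber_prasad_2015}, so $R_{u,K}(G')\subset C$. Your ``root-group part'' is therefore trivial: it could only be split, and a wound group has no nontrivial split subgroups.

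Your step ``so $R_{u,K}(G')\cap C$ is the unipotent radical of $C$'' does not follow from a weight decomposition. You need $C/R_{u,K}(G')\cong Z_{G'/R_{u,K}(G')}(T')$ \sga{XII, Thm.~7.1.e)}, which is pseudo-reductive by \cite[Prop.~1.2.4]{conrad_gabber_prasad_2015}. This gives $R_{u,K}(C)=R_{u,K}(G')$. Since $C$ is nilpotent with central maximal torus $T'$, $\mathcal{D}(C)$ is unipotent and hence lies in $R_{u,K}(C)=R_{u,K}(G')$. So the ``intersection'' you single out as the main obstacle is just $\mathcal{D}(C)$, which is unirational by Lemma~\ref{DGUnirat} because $C$ is. The kernel $U$ is then exactly an extension of $\mathcal{D}(C)$ by $R_{us,K}(G)$.

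Your description of $U$ as ``generated by'' pieces does not make sense as stated. The root groups and $\mathcal{D}(C)$ live in $G'$, not in $G$, and lifting them to $G$ is precisely an extension problem.

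That extension problem is where your argument actually fails. Your claim that an extension of unirational groups is unirational is not what \cite[Thm.~1.6]{rosengarten2024rigidityunirationalgroups} says; that result only lets one check unirationality after a separable field extension. The claim is also not available in the generality of this proposition, which has no hypothesis on $[K:K^p]$. The paper invokes \cite[Thm.~2.4]{Rosengarten_extuni} only when $[K:K^p]\leq p$.

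What makes it work here is that the kernel $R_{us,K}(G)$ is split. Split groups are permawound \cite[Prop.~5.3]{Rosengarten_Permawound}, and an extension of a unirational group by a permawound one is unirational \cite[Prop.~5.3]{Rosengarten_extuni}. You can also argue directly. Let $F$ be the function field of an affine space dominating $\mathcal{D}(C)$. Over $F$, the $R_{us,K}(G)$-torsor $U\to\mathcal{D}(C)$ is trivial because $H^1(F,R_{us,K}(G)_F)=1$. This gives a rational lift $\sigma$, and $(x,s)\mapsto\sigma(x)s$ is a dominant map from an affine space onto $U$.

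Finally, your sketch of quasi-reductivity of $C^{\mathrm{ab}}$ is not an argument. The preimage in $C$ of a split subgroup of $C^{\mathrm{ab}}$ is an extension of a split group by $\mathcal{D}(C)$, not a split group. It also lies inside $R_{u,K}(G')$, which $T'$ centralizes, so the dynamic decomposition of $G'$ gives no handle on it. What must be shown is that $R_{u,K}(C^{\mathrm{ab}})=R_{u,K}(G')/\mathcal{D}(C)$ is wound. This does not follow formally from woundness of $R_{u,K}(G')$, since quotients of wound groups need not be wound. The paper's proof does not argue this point separately either, so if you keep it you must supply a real argument.
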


\begin{proof}
Set $Z'\coloneqq Z_{G'}(T')$. Note that ${R}_{u,K}(G') \subset Z'$ since the action of $T'$ on ${R}_{u,K}(G')$ is trivial by \cite[Prop.~B.4.4]{conrad_gabber_prasad_2015}. Also, by \sga{XII, Thm.~7.1.e)}, we have an isomorphism $Z{'/{R}_{u,K}(G')\overset{\sim}{\to} Z_{G'/{R}_{u,K}(G')}(T')}$ where the latter group is pseudo-reductive by \cite[Prop.~1.2.4]{conrad_gabber_prasad_2015}. Hence ${R}_{u,K}(G') = {R}_{u,K}(Z')$.
Furthermore, since $G'$ is unirational, so is $Z'$ by \cite[Prop.~7.12]{rosengarten2024rigidityunirationalgroups}, and hence the same holds for $\mathcal{D}(Z')$ by Lemma \ref{DGUnirat}. Now, since $Z'$ is nilpotent, we have an isomorphism $Z'_{\overline{K}}\cong T'_{\overline{K}} \times (Z'/T')_{\overline{K}}$, so that $\mathcal{D}(Z')$ is unipotent. Let us then consider the map
$${R}_{u,K}(G)\twoheadrightarrow {R}_{u,K}(G)/{R}_{us,K}(G)\overset{\sim}{\to} {R}_{u,K}(G')={R}_{u,K}(Z')\twoheadrightarrow {R}_{u,K}(Z')/\mathcal{D}(Z') \overset{\sim}{\to}{R}_{u,K}({Z'}^\mathrm{ab}).$$
Its kernel $U$ is an extension of $\mathcal{D}(Z')$ by ${R}_{us,K}(G)$. Since ${R}_{us,K}(G)$ is permawound by \cite[Prop.~5.3]{Rosengarten_Permawound}, the group $U$ is unirational by \cite[Prop.~5.3]{Rosengarten_extuni}.
\end{proof}

\begin{thm}
\label{theorem unipotent radical unirational}
Let $G$ be a unirational algebraic $K$-group. Assume that $[K:K^p] \leq p$. Then ${R}_{u,K}(G)$ is unirational.
\end{thm}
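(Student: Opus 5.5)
The plan is to combine Proposition \ref{ReductionCasQREdComm} with Proposition \ref{proposition extension unipotent par tore unirational}, and then to use the fact that an extension of unirational groups is unirational.

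First, set $G' \coloneqq G/R_{us,K}(G)$, choose a maximal torus $T'$ of $G'$, and put $Z' \coloneqq Z_{G'}(T')$. By Proposition \ref{ReductionCasQREdComm}, the group ${Z'}^{\mathrm{ab}}$ is a unirational commutative quasi-reductive algebraic $K$-group. The same proposition gives an exact sequence
\[1 \to U \to R_{u,K}(G) \to R_{u,K}({Z'}^{\mathrm{ab}}) \to 1\]
with $U$ unirational and unipotent. Since $[K:K^p]\leq p$, Proposition \ref{proposition extension unipotent par tore unirational} applies to ${Z'}^{\mathrm{ab}}$. Hence the quotient $R_{u,K}({Z'}^{\mathrm{ab}})$ in the sequence is unirational as well.

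Second, it remains to show that an extension of a unirational group by a unirational group is unirational. This is the step that needs care, because unirationality of an extension does not follow formally: one needs rational sections of the quotient map. I would proceed as follows.

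\begin{itemize}
\item The quotient map $\pi\colon R_{u,K}(G) \to Q \coloneqq R_{u,K}({Z'}^{\mathrm{ab}})$ is a $U$-torsor over $Q$.
\item Compose a dominant rational map $\mathbb{A}^m \dashrightarrow Q$ with $\pi$ and restrict the torsor to an open subset $V \subset \mathbb{A}^m$. This gives a $U$-torsor over $V$, hence a class over the function field $K(V)=K(t_1,\dots,t_m)$.
\item If that generic torsor is trivial, there is a rational section $V \dashrightarrow R_{u,K}(G)$. Then the map
\[ V \times \mathbb{A}^n \dashrightarrow R_{u,K}(G), \qquad (v,w) \mapsto \sigma(v)\,\phi(w), \]
with $\phi\colon \mathbb{A}^n \dashrightarrow U$ dominant, is dominant. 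This proves unirationality.
\end{itemize}

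The obstacle is that $H^1(K(V),U)$ need not vanish: $U$ is wound in general, and the function field $K(V)$ is not of Kato dimension $\leq 1$. So I would avoid that argument and instead rely on Rosengarten's extension result, already used in the proof of Proposition \ref{ReductionCasQREdComm}: \cite[Prop.~5.3]{Rosengarten_extuni}. That proof applied it to an extension by a permawound group, so I expect its hypothesis to be that the kernel is permawound; our kernel $U$ is only known to be unirational. Under $[K:K^p]\leq p$, unirational unipotent groups are permawound, by the argument of Proposition \ref{proposition extension unipotent par tore unirational} via \cite[Prop.~9.6]{rosengarten2024rigidityunirationalgroups}. I therefore expect that \cite[Prop.~5.3]{Rosengarten_extuni} applies to the sequence above, with $U$ permawound and the quotient unirational, and gives unirationality of $R_{u,K}(G)$. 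Checking that these hypotheses match, in particular that $U$ is permawound, is the main point to verify.

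If that citation turns out not to apply directly, I would fall back on a second route. Permawoundness is stable under extensions, by \cite[Prop.~5.5]{Rosengarten_Permawound}. Both $U$ and $R_{u,K}({Z'}^{\mathrm{ab}})$ would then be permawound, because each is unirational and unipotent and $[K:K^p]\leq p$. So $R_{u,K}(G)$ is permawound, hence unirational by \cite[Prop.~9.7]{rosengarten2024rigidityunirationalgroups}.
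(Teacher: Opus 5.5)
Your reduction is the same as the paper's. You combine Proposition~\ref{ReductionCasQREdComm} with Proposition~\ref{proposition extension unipotent par tore unirational} to write $R_{u,K}(G)$ as an extension of the unirational group $R_{u,K}({Z'}^{\mathrm{ab}})$ by the unirational group $U$. Only the last step differs.

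The paper finishes in one line with \cite[Thm.~2.4]{Rosengarten_extuni}: when $[K:K^p]\leq p$, any extension of unirational groups is unirational. The paper uses the same theorem again in \S\ref{subsection strongly wound quotient}. This is exactly the nontrivial input you correctly said a naive rational-section argument cannot supply.

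Your permawound detour also works. As the paper uses them, \cite[Prop.~9.6]{rosengarten2024rigidityunirationalgroups} makes wound unirational groups permawound (under $[K:K^p]\leq p$), and \cite[Prop.~9.7]{rosengarten2024rigidityunirationalgroups} makes smooth connected permawound groups unirational. Combined with stability of permawoundness under extensions, either of your routes gives the result. The second route mirrors the paper's own argument in Proposition~\ref{proposition extension unipotent par tore unirational} and essentially reproves the unipotent case of \cite[Thm.~2.4]{Rosengarten_extuni}.

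One detail needs filling. The paper applies \cite[Prop.~9.6]{rosengarten2024rigidityunirationalgroups} only to a wound group. Your $U$ is an extension of $\mathcal{D}(Z')$ by $R_{us,K}(G)$ and may have a nontrivial split part. So you should proceed in three steps:
\begin{enumerate}
\item Note that $U_{\mathrm{split}}$ is permawound by \cite[Prop.~5.3]{Rosengarten_Permawound}.
\item Apply Prop.~9.6 to the wound unirational quotient $U/U_{\mathrm{split}}$.
\item Use extension-stability to conclude that $U$ is permawound.
\end{enumerate}
The same applies to $R_{u,K}({Z'}^{\mathrm{ab}})$ in your second route.

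The trade-off: citing Thm.~2.4 is shorter and does not need unipotence. Your route stays within the permawound machinery the paper already relies on elsewhere, at the cost of this extra bookkeeping.
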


\begin{proof}
We keep the notation of Proposition \ref{ReductionCasQREdComm}. By Proposition \ref{proposition extension unipotent par tore unirational}, ${R}_{u,K}(Z_{G'}(T')^\mathrm{ab})$ is unirational. Proposition \ref{ReductionCasQREdComm} implies that ${R}_{u,K}(G)$ is an extension of unirational unipotent algebraic $K$-groups. It is then unirational by \cite[Thm.~2.4]{Rosengarten_extuni}.
\end{proof}

\section{The pseudo-semisimple case}
\label{section pseudo-semisimple case}

To study the first Galois cohomology sets of pseudo-semisimple groups, we use the structure theory one can find in \cite{conrad_gabber_prasad_2015}. We want to express a Conjecture I for pseudo-semisimple groups using the assumption $\sdim_l(K) \leq 1$ only for some primes $l$, depending on the group under consideration.

\subsection{The torsion primes of a pseudo-semisimple group}
Fix a pseudo-semisimple algebraic \mbox{$K$-group} $G$ and consider $\overline{G}\coloneqq G_{\overline{K}}/ {R}_{u,\overline{K}}(G_{\overline{K}})$. Define the \emph{set of torsion primes} $S(G)$ of $G$ as $S(\overline{G})$, which is the usual set of torsion primes of the semisimple group $\overline{G}$ as defined in \cite[\S 4.8 and \S 5.1]{GilleConjII}. It only depends on the type of $\overline{G}$. Also, for any finite extension $K/k$, we have $S(\mathrm{R}_{K/k}(G))=S(G)$. Indeed, if $K/k$ is separable, $\mathrm{R}_{K/k}(G)_{\Kbar}$ is a product of copies of $G_{\Kbar}$. If $K/k$ is purely inseparable, the adjunction $\mathrm{R}_{K/k}(G)_{K}\rightarrow G$ has a smooth, connected, unipotent kernel (\cite[Prop. A.5.11]{conrad_gabber_prasad_2015}), yielding isomorphic reductive quotients over $\Kbar$.

\begin{lem}
\label{lemme h2 tame groups}
Let $q\colon H\rightarrow G$ be a central quotient of smooth connected perfect linear algebraic $K$-groups. Set ${Z\coloneqq\ker(q)}$. Then $S(H/R_{u,K}(H)) = S(G/R_{u,K}(G))$. Moreover, if $\sdim_l(K) \leq 1$ for all $l\in S(G/R_{u,K}(G))$, then $H^2(K,Z)=0$.
\end{lem}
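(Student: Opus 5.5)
We first compare the torsion primes and then bound the center. Write $\overline{H}$ and $\overline{G}$ for the maximal reductive quotients over $\Kbar$ of $H_{\Kbar}$ and $G_{\Kbar}$. These are semisimple, since $H$ and $G$ are perfect. Also $S(H/R_{u,K}(H))$ only depends on the reductive quotient over $\Kbar$ of $(H/R_{u,K}(H))_{\Kbar}$, and this equals $\overline{H}$, because $R_{u,K}(H)_{\Kbar}$ is unipotent and lies in $R_u(H_{\Kbar})$. The map $q_{\Kbar}$ is surjective with central kernel, so it carries $R_u(H_{\Kbar})$ into $R_u(G_{\Kbar})$. Conversely, the preimage of $R_u(G_{\Kbar})$ is an extension of a unipotent group by the central group $Z_{\Kbar}$. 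Hence we get a surjection $\overline{H}\to\overline{G}$ whose kernel is the image of $Z_{\Kbar}$, which is central. A central isogeny between semisimple groups preserves the root system, so the types agree and $S(\overline{H})=S(\overline{G})$.

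For the vanishing of $H^2(K,Z)$, we use the structure of the center: $Z$ is a commutative affine group scheme, in general neither smooth nor connected. We dévisse it as follows. Let $Z_m$ be its maximal subgroup of multiplicative type. Because $H$ is perfect, $Z_m$ should be finite; its character group should inject into the character group of the center of $\overline{H}$, hence into the fundamental group of the root datum. Then $Z/Z_m$ is unipotent, and $H^2(K,-)$ of a unipotent commutative affine group vanishes by \cite[Lem.~3.3]{TossiciVistoli}, as already used in Proposition \ref{proposition h1 commutative unipotent}. So it suffices to show $H^2(K,Z_m)=0$.

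The order of $Z_m$ is built from primes that divide the order of the center of the simply connected cover of $\overline{H}$. These primes lie in $S(\overline{H})$, by the convention of \cite[\S 5.1]{GilleConjII} that torsion primes include the primes dividing the order of the fundamental group. Primes equal to $p$ are also harmless, because $\sdim_p(K)\le 1$ is automatic for $p$-primary parts of $H^2$ of finite multiplicative groups of $p$-power order. We then embed $Z_m$ into a quasi-trivial torus: $1\to Z_m\to P\to Q\to 1$, with $P$ a product of $\mathrm{R}_{L/K}(\GG_m)$ and $Q$ a torus. Taking cohomology gives $H^1(K,Q)\to H^2(K,Z_m)\to H^2(K,P)=\bigoplus \Br(L)$. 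The group $H^2(K,Z_m)$ is killed by $|Z_m|$, and the $l$-torsion of $\Br(L)$ vanishes for $l\in S$. We kill the image of $H^1(K,Q)$ by a restriction–corestriction argument, reducing to extensions over which $Q$ splits, where Hilbert 90 applies. This yields $H^2(K,Z_m)=0$, and therefore $H^2(K,Z)=0$.

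The main obstacle is the structure of $Z$ over an imperfect field. The multiplicative part of the center of a perfect pseudo-reductive group need not correspond naively to the center of $\overline{H}$, and the order of $Z_m$ must be controlled. I would try first to bound it using \cite[Prop.~A.5.15]{conrad_gabber_prasad_2015}-type results on centers of pseudo-reductive groups, namely that $Z_{H/R_{u,K}(H)}$ modulo its unipotent part embeds into the Weil restriction of the center of the reductive quotient.
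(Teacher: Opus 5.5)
Your skeleton matches the paper's proof. You split $Z$ into its maximal multiplicative subgroup $Z_m$ and the unipotent quotient $Z/Z_m$, which \cite[Lem.~3.3]{TossiciVistoli} handles. You identify $Z_{m,\Kbar}$ with the central kernel of $\overline{H}\to\overline{G}$, and you note that the primes dividing its order lie in $S(\overline{G})$. The torsion-prime comparison is essentially fine. The step ``hence the kernel is the image of $Z_{\Kbar}$'' needs a word of justification, for instance the surjectivity of $R_u(H_{\Kbar})\to R_u(G_{\Kbar})$ that the paper uses. The worry in your last paragraph is unfounded. $Z_{m,\Kbar}\cap R_u(H_{\Kbar})$ is both of multiplicative type and unipotent, hence trivial. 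So $Z_{m,\Kbar}$ maps isomorphically onto that central kernel, and no result on centers of pseudo-reductive groups is needed. Its character group is then a quotient, not a subgroup, of that of $Z(\overline{H})$.

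The genuine gap is in your self-contained proof that $H^2(K,Z_m)=0$; the paper simply quotes this from \cite[Prop.~4.6.2]{GilleConjII}. From $1\to Z_m\to P\to Q\to 1$ and Hilbert 90 for $P$, you get $H^1(K,Q)\hookrightarrow H^2(K,Z_m)$. So you must show that $H^1(K,Q)$ has no $l$-primary torsion for $l$ dividing $|Z_m|$. Restriction--corestriction to a splitting field $K'$ of $Q$, plus Hilbert 90, only gives $[K':K]\cdot H^1(K,Q)=0$, and in general no splitting field has degree prime to $l$. At this step you use nothing beyond Hilbert 90, since the Brauer hypothesis was spent on the fields defining $P$, so the argument cannot be right. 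For example, take $K=\mathbb{R}$ and $\mu_2\subset\GG_m\subset P=\mathrm{R}_{\mathbb{C}/\mathbb{R}}(\GG_m)$. Then $\Br(\mathbb{C})=0$ and $Q$ splits over $\mathbb{C}$, yet $H^1(\mathbb{R},Q)\cong H^2(\mathbb{R},\mu_2)=\Br(\mathbb{R})[2]\neq 0$. The missing ingredient is that $\sdim_l(K)\leq 1$ kills the $l$-primary part of $H^1(K,T)$ for every $K$-torus $T$. Its proof uses the Brauer hypothesis over all intermediate fields of a splitting extension: reduce to an $l$-Sylow subgroup by restriction--corestriction, then show via Tate's criterion that the multiplicative group of the splitting field is cohomologically trivial. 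Separately, your claim that the prime $p$ is automatically harmless is false. In characteristic $p$ the Kummer sequence gives $H^2(K,\mu_p)=\Br(K)[p]$, which is nonzero for instance when $K=\mathbb{F}_p((t))$. It does no damage here only because every prime dividing $|Z_m|$, $p$ included, lies in $S(\overline{G})$ and is covered by the hypothesis.
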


\begin{proof}
Denote by $\mu$ the largest multiplicative subgroup of the commutative group $Z$. By dévissage, it suffices to show that $H^2(K,Z/\mu)$ and $H^2(K,\mu)$ are trivial. Since $Z/\mu$ is unipotent, the first case comes from \cite[Lem.~3.3]{TossiciVistoli}. Next, observe that the map $q$ induces over $\overline{K}$ a surjective map $\overline{q} \colon \overline{H} \to \overline{G}$ between the semisimple quotients. Since ${R_{u,\overline{K}}(H_{\overline{K}})\rightarrow {R}_{u,\overline{K}}(G_{\overline{K}})}$ is onto (see \cite[Cor.~14.11]{BorelLinAlgGrp}), the Snake Lemma implies that $\overline{q}$ is a central isogeny with kernel $\mu_{\overline{K}}$. This gives ${S(H/R_{u,K}(H))=S(\overline{H})=S(\overline{G})=S(G/R_{u,K}(G))}$. Consider $\overline{G}^\mathrm{sc}\rightarrow \overline{G}$, the simply connected cover of $\overline{G}$. By \cite[2.2.2]{BourbakiSerre}, $Z(\overline{G}^\mathrm{sc})$ is $n$-torsion for an integer $n$ whose prime divisors are in $S(\overline{G})$. The same holds for $\mu_{\overline{K}}$ since it is a quotient of $Z(\overline{G}^\mathrm{sc})$. Therefore, if $\sdim_l(K) \leq 1$ for all $l\in S(G/R_{u,K}(G))$, then $H^2(K,\mu)=0$ by \cite[Prop.~4.6.2]{GilleConjII}.
\end{proof}

\subsection{Classification of pseudo-semisimple groups} 
\label{ClassificationPseudoSS}
Assume that $[K:K^2]\leq 2$ if $p=2$. Let $G$ be a pseudo-semisimple $K$-group. Then, by \cite[Thm.~10.2.1]{conrad_gabber_prasad_2015}, $G$ is a product $G_1\times G_2$ where $G_1$ is \emph{generalized standard} (in the sense of \cite[Def.~10.1.9]{conrad_gabber_prasad_2015}, not \cite[Def. 9.1.7]{conrad_prasad_2016}), and $G_2$ is \emph{totally nonreduced} (see \cite[Def.~10.1.1]{conrad_gabber_prasad_2015}) or trivial. Note that $G_2$ is trivial when $p>2$ (see \cite[Thm.~2.3.10]{conrad_gabber_prasad_2015}). Although we bypass their explicit definitions, both types of groups are well classified. First, \cite[Rmk.~10.1.11]{conrad_gabber_prasad_2015} asserts that $G_1$ is a central quotient of a perfect group $\mathrm{R}_{K'/K}(G')$ where $K'$ is a finite reduced $K$-algebra ({i.e.}, a product of finitely many finite field extensions of $K$) and $G'$ is a $K'$-group whose fibers over the component fields of $K'$ are either \emph{absolutely simple and semisimple simply connected} or, if $p \in \{2, 3\}$, \emph{basic exotic} (\cite[Def.~7.2.6]{conrad_gabber_prasad_2015}). Second, $G_2$ is either trivial or, by \cite[Prop.~10.1.4]{conrad_gabber_prasad_2015}, of the form $\mathrm{R}_{K''/K}(G'')$ for some finite reduced $K$-algebra $K''$ and a $K''$-group $G''$ that is \emph{basic nonreduced} (\cite[Def.~10.1.2]{conrad_gabber_prasad_2015}). Ultimately, $G$ is a central quotient of a pseudo-semisimple group $\mathrm{R}_{L/K}(H)$, where $L/K$ is a finite reduced $K$-algebra and where $H$ is a $L$-group such that each of its fibers is one of the three classes of \emph{basic groups} cited above. The group $G$ is said to be \emph{standard} if the only basic group appearing in this description is the simply connected semisimple one. In particular, $G$ is always standard when $p>3$.

\begin{thm}
\label{theorem conjecture I pseudo semisimple}
Let $G$ be a pseudo-semisimple $K$-group. Suppose that $\sdim_l(K)\leq 1$ for all ${l\in S(G)}$. If $G$ is not standard, further assume that $[K:K^p]\leq p$. Then $H^1(K,G)$ is trivial.
\end{thm}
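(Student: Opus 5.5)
Throughout, we use the classification recalled in \S\ref{ClassificationPseudoSS}: $G$ is a central quotient $q\colon \mathrm{R}_{L/K}(H)\to G$, where $L$ is a finite reduced $K$-algebra and the fibers of $H$ are basic groups. When $p=2$ this classification requires $[K:K^2]\leq 2$. That hypothesis is available, because if $p=2$ and $G$ is standard we can still take $G_2$ trivial, or more simply use \cite[Rmk.~10.1.11]{conrad_gabber_prasad_2015} directly for the generalized standard part. Set $\widetilde G\coloneqq \mathrm{R}_{L/K}(H)$ and $Z\coloneqq\ker(q)$.

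The strategy is the exact sequence
\[ H^1(K,\widetilde G)\to H^1(K,G)\to H^2(K,Z), \]
which is exact in nonabelian cohomology for a central extension.
\begin{itemize}
\item By Lemma \ref{lemme h2 tame groups}, applied to $q$ (both groups are perfect and smooth connected linear), $S(\widetilde G)=S(G)$ and $H^2(K,Z)=0$.
\item Here $R_{u,K}(G)=1$, and $R_{u,K}(\widetilde G)=1$ since a Weil restriction of a pseudo-reductive group is pseudo-reductive. The torsion-prime hypothesis therefore transfers.
\end{itemize}
Hence it suffices to show $H^1(K,\widetilde G)=1$. Decomposing $L=\prod L_i$, we get $H^1(K,\widetilde G)=\prod H^1(L_i,H_i)$ by the bijectivity of $H^1(K,\mathrm{R}_{L_i/K}(H_i))\to H^1(L_i,H_i)$ for smooth $H_i$. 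The hypotheses descend to $L_i$: $\sdim_l(L_i)\leq 1$, $[L_i:L_i^p]\leq p$, and $S(H_i)\subset S(\widetilde G)$ by the Weil restriction invariance of torsion primes. We are reduced to a basic group $H$ over a field $L$.

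\textbf{Simply connected semisimple case.} When $H$ is absolutely simple, simply connected and semisimple, the classical refinement \cite[Thm.~5.2.5]{GilleConjII} gives $H^1(L,H)=1$ under $\sdim_l(L)\le1$ for $l\in S(H)$. This settles the standard case completely.

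\textbf{Basic exotic case} ($p\in\{2,3\}$). By \cite[Def.~7.2.6]{conrad_gabber_prasad_2015} and the structure results of Chapter 7 there, $H$ sits in a Weil-restriction diagram. Over $L'=L^{1/p}$ or a related purely inseparable extension, there is a very special isogeny $\overline{G}\to \overline{G}'$, and $H\subset \mathrm{R}_{L'/L}(G'_{L'})$ is defined as a preimage of $\mathrm{R}_{L'/L}$ of a Levi-type subgroup. I would exploit the fact that $H$ contains a Levi subgroup and is the image of an exact sequence. Concretely, I would use the fibration
\[ H\to \mathrm{R}_{L'/L}(G') \to X, \]
relating $H^1(L,H)$ to $H^1(L',G')=1$ (Gille again, since $G'$ is simply connected semisimple with the same or smaller torsion primes). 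Then I would kill the fiber term using $[L:L^p]\leq p$. That condition makes $L'/L$ of degree $p$ and the quotient a unipotent-type homogeneous space with trivial twisted contributions, via Proposition \ref{proposition h1 SolvableCase} or \cite[Lem.~3.3]{TossiciVistoli}.

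\textbf{Basic nonreduced case} ($p=2$). Here $H$ is a form built from $\mathrm{Sp}_{2n}$ over $L^{1/2}$. The same approach should work: realize $H$ inside $\mathrm{R}_{L^{1/2}/L}(\mathrm{Sp}_{2n})$ with a unipotent cokernel that is a vector-group-like homogeneous space, and use $H^1(L^{1/2},\mathrm{Sp}_{2n})=1$.

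The main obstacle is these two non-standard basic cases. Making the comparison with the ambient Weil restriction precise requires controlling $H^1$ of the quotient homogeneous space, and this is exactly where $[K:K^p]\leq p$ must enter. Failing a direct argument, I would instead look for a pseudo-Borel or pseudo-parabolic subgroup and argue via Steinberg's method: show every torsor reduces to a unirational solvable subgroup, which has trivial $H^1$ by Proposition \ref{proposition h1 SolvableCase}.
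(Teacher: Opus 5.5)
\textbf{There is a genuine gap: the basic exotic and basic nonreduced cases are not proved.} Your reduction is the same as the paper's: pass to the central cover $\mathrm{R}_{L/K}(H)$, use Lemma~\ref{lemme h2 tame groups} to get $H^2(K,Z)=0$, identify $H^1(K,\mathrm{R}_{L/K}(H))$ with $\prod_i H^1(L_i,H_i)$, and apply \cite[Thm.~5.2.5]{GilleConjII} to the simply connected semisimple factors. That part is fine and settles the standard case. The two non-standard basic cases are exactly where the extra hypothesis $[K:K^p]\leq p$ enters, and for them you only give a plan.

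Consider the basic exotic case. Put $E=\mathrm{R}_{L'/L}(G')$. Since $H^1(L,E)=H^1(L',G')=1$, the exact sequence for $H\subset E$ only gives a bijection between $H^1(L,H)$ and the set of $G'(L')$-orbits on $X(L)$, where $X=E/H$. So ``killing the fiber term'' means showing that $G'(L')$ acts transitively on $X(L)$. That is equivalent to the statement you want, not a step towards it. None of the tools you name addresses it:
\begin{itemize}
\item Proposition~\ref{proposition h1 SolvableCase} is about $H^1$ of unirational groups.
\item \cite[Lem.~3.3]{TossiciVistoli} is about $H^2$ of commutative unipotent groups.
\end{itemize}
$X$ is a homogeneous space, not a group, and you give no description of it. The basic nonreduced sketch has the same defect. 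The Steinberg-style fallback just restates the whole difficulty. Note also that in the paper the quasi-splitness result, Theorem~\ref{theorem pseudo-borel}, is deduced from the statement at hand, not the other way round.

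The paper does not treat these factors by hand. It cites \cite[Prop.~7.3.3.(1)]{conrad_gabber_prasad_2015} for basic exotic factors; this is precisely where $[K:K^p]=p$ is needed, since otherwise that result does not cover all cases. It cites \cite[Prop.~9.9.4.(1)]{conrad_gabber_prasad_2015} for basic nonreduced factors.

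For intuition on the exotic case, compare $H$ with the Levi subgroup rather than with the ambient Weil restriction. Write $H=f^{-1}(M)$, where $f=\mathrm{R}_{L'/L}(\pi)$ for the very special isogeny $\pi\colon G'\to\overline{G}'$, and $M\subset\mathrm{R}_{L'/L}(\overline{G}')$ is a Levi $L$-subgroup.
\begin{itemize}
\item The map $f\colon H\to M$ is injective on $L_s$-points, because $\ker\pi$ is infinitesimal.
\item When $L'=L^{1/p}$, which $[L:L^p]=p$ forces, it is also surjective on $L_s$-points. On the root groups where $\pi$ is the Frobenius, the image is $(L'\otimes_L L_s)^p=L_s$.
\end{itemize}
Since Galois cohomology depends only on the $\Gamma$-group of $L_s$-points, this gives $H^1(L,H)\simeq H^1(L,M)$, and the semisimple theory applies to $M$. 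Your fibration runs in the opposite, unhelpful direction.
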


\begin{proof}
We return to the notation of \S\ref{ClassificationPseudoSS}. Denote by $Z$ the kernel of the central quotient $\mathrm{R}_{L/K}(H)\rightarrow G$. From Lemma \ref{lemme h2 tame groups}, $H^2(K,Z)$ is trivial, so by dévissage it remains to prove that $H^1(K,\mathrm{R}_{L/K}(H))$ is trivial. Observe that $H^1(K,\mathrm{R}_{L/K}(H))=H^1(L,H)$. The latter $H^1$ set decomposes into the product of the $H^1$ sets of the fibers of $H$ over the component fields of $L$, so that we are reduced to the case where $H$ is a basic group. For the semisimple factors, this is \cite[Thm.~5.2.5]{GilleConjII}. For the basic exotic factors, this is \cite[Prop.~7.3.3.(1)]{conrad_gabber_prasad_2015} (here we need $[K:K^p] =p$, otherwise \textit{loc.~cit.}~does not cover all possible cases). For the basic nonreduced factors, this is \cite[Prop.~9.9.4.(1)]{conrad_gabber_prasad_2015}.
\end{proof}

\section{Quasi-splitness results}
\label{section triviality h1}

The classical proof of Serre's Conjecture I by Steinberg proves simultaneously that every reductive group $G$ over $K$ is quasi-split if $\sdim(K) \leq 1$. This means that $G$ admits a Borel subgroup over $K$, that is, a solvable parabolic subgroup, or equivalently, a minimal parabolic $K_s$-subgroup defined over $K$. We deduce from Theorem \ref{theorem conjecture I pseudo semisimple} a similar result with pseudo-Borel subgroups. This is Theorem \ref{theorem pseudo-borel} below. 
\subsection{Pseudo-parabolic subgroups}
\label{pseudo-parabolics}
Let $G$ be a smooth connected linear algebraic group over $K$. For any cocharacter $\lambda \colon \mathbb{G}_m \to G$, let $P_G(\lambda)$ be the subgroup of $G$ whose $R$-points are, for all $K$-algebras $R$, the $g \in G(R)$ such that the scheme morphism $\mathbb{G}_{m,R} \to G_R$, $t \mapsto \lambda(t)g\lambda(t)^{-1}$ can be extended to $\mathbb{A}_{R}^1$. The groups $P_G(\lambda)$ are smooth and connected by \cite[Prop.~2.1.8]{conrad_gabber_prasad_2015}. We then define a \emph{pseudo-parabolic subgroup} of $G$ to be a subgroup of the form ${R}_{u,K}(G) P_G(\lambda)$. When $G$ is reductive, the pseudo-parabolic subgroups are exactly the parabolic subgroups (see \cite[Prop.~2.2.9]{conrad_gabber_prasad_2015}). 

\subsection{Pseudo-borel subgroups}
\label{pseudo-borels}
In parallel to the reductive case, define a \emph{pseudo-Borel $K$-subgroup} $B$ of $G$ as a subgroup such that $B_{K_s}$ is minimal among the pseudo-parabolic $K_s$-subgroups of $G_{K_s}$ (see \cite[\S C.2]{conrad_prasad_2016}). Equivalently, by \cite[Prop.~3.5.4]{conrad_gabber_prasad_2015}, a pseudo-Borel $K$-subgroup of $G$ is a solvable pseudo-parabolic subgroup. The group $G$ is said to be \textit{quasi-split} if it has a pseudo-borel subgroup over $K$. 

\begin{thm}
\label{theorem pseudo-borel}
Let $G$ be a smooth connected linear algebraic $K$-group. Suppose that ${\sdim(K)\leq 1}$. If $G/{R}_{u,K}(G)$ is not standard, further assume that $[K:K^p]\leq p$. Then $G$ is quasi-split.
\end{thm}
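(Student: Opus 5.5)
We first reduce to the pseudo-semisimple case. Pseudo-parabolic subgroups of $G$ contain ${R}_{u,K}(G)$ by definition, and by \cite[Prop.~2.2.10]{conrad_gabber_prasad_2015} they correspond bijectively, compatibly with separable base change, to the pseudo-parabolic subgroups of $G/{R}_{u,K}(G)$. So we may assume $G$ is pseudo-reductive. We then write $G=\mathcal{D}(G)\cdot C$, where $C=Z_G(T)$ is a Cartan subgroup for a maximal $K$-torus $T$ (\cite[Prop.~1.2.6]{conrad_gabber_prasad_2015}). The pseudo-parabolic subgroups of $G$ are exactly the products $P\cdot C'$ with $P$ a pseudo-parabolic subgroup of $\mathcal{D}(G)$, i.e. the map $P\mapsto P\cap\mathcal{D}(G)$ is a bijection (\cite[Prop.~3.2.8/3.5.1-type statements]{conrad_gabber_prasad_2015}). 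This reduces us to the case where $G$ is pseudo-semisimple. Standardness is preserved under this reduction, since it depends only on $\mathcal{D}(G/{R}_{u,K}(G))$.

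Next we reduce to the pseudo-split form via pseudo-inner forms. By \cite{conrad_prasad_2016}, $G$ is a pseudo-inner form of a quasi-split pseudo-semisimple $K$-group $G^{q}$, uniquely determined up to isomorphism. There is a group $G^{q}_{\mathrm{ad}}$ playing the role of the adjoint group, namely the image of $G^q$ in $\mathrm{Aut}_{G^q}$ modulo its center, or better its \emph{pseudo-adjoint} central quotient. The form $G$ is then obtained by twisting $G^q$ by a class $\xi\in H^1(K,G^{q}/Z_{G^q})$. For the twist we use the smooth version: conjugation by $G^q(K_s)$ acts through the quotient $(G^q/Z_{G^q})$, and we take its largest smooth subgroup. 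The key fact from \cite[\S C.2]{conrad_prasad_2016} that we use is that a pseudo-inner form of a quasi-split group is quasi-split if and only if the twisting class comes from $H^1(K,\cdot)$ of a quasi-split object. The cleanest version of this is: $G$ is quasi-split if and only if $\xi$ lifts to $H^1(K,G^q)$, up to the action on pseudo-Borels. Indeed, a twist by a cocycle with values in $G^q(K_s)$ itself yields $G\simeq G^q$, which is quasi-split.

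The argument then runs as follows. The class $\xi$ lives in $H^1(K,\overline{G^q})$, where $\overline{G^q}=G^q/Z_{G^q}$ is a central quotient of $G^q$ (which is perfect). Lifting $\xi$ to $H^1(K,G^q)$ is obstructed by a class in $H^2(K,Z_{G^q})$. By Lemma \ref{lemme h2 tame groups}, applied to the central quotient $G^q\to\overline{G^q}$, this $H^2$ vanishes since $\sdim(K)\le1$. Hence $\xi$ is the image of some $\eta\in H^1(K,G^q)$. By Theorem \ref{theorem conjecture I pseudo semisimple} (whose hypotheses hold: $S(G^q)=S(G)$, $\sdim(K)\le 1$, and $[K:K^p]\le p$ in the nonstandard case), $\eta$ is trivial, so $\xi$ is trivial and $G\simeq G^q$ is quasi-split.

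The main obstacle is making the pseudo-inner twisting precise. In the pseudo-reductive setting, the automorphisms induced by $G(K_s)$ are governed by a group $G/Z_G$ which need not be smooth or pseudo-reductive in the naive sense. We must therefore check, with the results of \cite{conrad_prasad_2016}, that the twisting group receives a central quotient map from $G^q$ to which Lemma \ref{lemme h2 tame groups} applies. If it is not literally a quotient, we would instead use the exact sequence $1\to Z\to G^q\to G^q/Z\to 1$ with $Z$ the scheme-theoretic center, and pass to largest smooth subgroups using \S\ref{largestsmsubgrp}.
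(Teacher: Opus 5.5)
Your reduction to the pseudo-semisimple case is fine, and your last step is correct as far as it goes: for the central quotient $G^q\to G^q/Z_{G^q}$, Lemma~\ref{lemme h2 tame groups} and Theorem~\ref{theorem conjecture I pseudo semisimple} do give $H^1(K,G^q/Z_{G^q})=1$. The gap is your claim that $G$ is the twist of $G^q$ by a class in $H^1(K,G^q/Z_{G^q})$. Pseudo-inner forms are twists by $(\mathrm{Aut}^{\mathrm{sm}}_{G^q/K})^0\cong(G^q\rtimes Z_{G^q,C})/C$, where $C$ is a Cartan subgroup and $\mathcal{D}(G^q)=G^q$ (see \S\ref{PseudoInnerForms}). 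This group is not a central quotient of $G^q$. It can be strictly larger than the image $G^q/Z_{G^q}$ of $G^q$, and the discrepancy is $Z_{G^q,C}/\mathrm{int}(C)$.

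Concretely, take $p=2$, $K'/K$ purely inseparable of degree $2$, and $G^q=\mathrm{R}_{K'/K}(\mathrm{SL}_2)$. The twisting group is then $\mathrm{R}_{K'/K}(\mathrm{PGL}_2)$, of dimension $6$. But $Z_{G^q}=\mathrm{R}_{K'/K}(\mu_2)$ is $1$-dimensional, so $G^q/Z_{G^q}$ has dimension $5$. The reason is that the map $C=\mathrm{R}_{K'/K}(\GG_m)\to Z_{G^q,C}=\mathrm{R}_{K'/K}(\GG_m)$ is squaring, whose image is only $\GG_m$. The pseudo-inner forms $\mathrm{R}_{K'/K}(\mathrm{SL}_1(D))$ are indexed by $H^1(K',\mathrm{PGL}_2)$, and nothing in your argument shows that the relevant class comes from $H^1(K,G^q/Z_{G^q})$. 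Your proposed fallback (scheme-theoretic center plus largest smooth subgroups) does not address this, since the defect is the cokernel $Z_{G^q,C}/\mathrm{int}(C)$, not the center.

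What is missing is control of $Z_{G^q,C}$. The paper does the dévissage along the central extension $1\to C\to G^q\rtimes Z_{G^q,C}\to(\mathrm{Aut}^{\mathrm{sm}}_{G^q/K})^0\to 1$. Besides $H^1(K,G^q)=1$, which you have, it uses two further facts:
\begin{enumerate}
\item $H^1(K,Z_{G^q,C})=1$, which follows from the quasi-splitness of $G^q$ (proof of \cite[Prop.~C.2.8]{conrad_prasad_2016});
\item $H^2(K,C)=0$ for the commutative pseudo-reductive group $C$, by \cite[Cor.~A.4]{Cesnavicius}, which uses $\sdim(K)\le 1$.
\end{enumerate}
Relatedly, you quote the existence of the quasi-split pseudo-inner form $G^q$ without justification. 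The paper obtains it (Proposition~\ref{proposition quasi split pseudo inner form}) from an $H^2$-obstruction in \cite[\S C.2.9]{conrad_prasad_2016}, which again vanishes because $\sdim(K)\le 1$.
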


To prove the theorem, let us introduce the notion of pseudo-inner forms from \cite{conrad_prasad_2016}, which generalizes the notion of inner forms for reductive groups to pseudo-reductive groups. 

\subsection{Pseudo-inner forms}
\label{PseudoInnerForms} Let $G$ be a pseudo-reductive $K$-group and $C$ be a Cartan subgroup of $\mathcal{D}(G)$. Consider the group functor $\mathrm{Aut}_{\mathcal{D}(G)/K}$ (resp.~$\mathrm{Aut}_{\mathcal{D}(G),C}$) of automorphisms of the pseudo-semisimple $K$-group $\mathcal{D}(G)$ (resp.~that fixes $C$ pointwise). It is representable by a linear algebraic $K$-group by \cite[Prop.~6.2.2]{conrad_prasad_2016} (resp.~by \cite[Thm.~2.4.1]{conrad_gabber_prasad_2015}).
Set $Z_{\mathcal{D}(G),C}\coloneqq\mathrm{Aut}_{\mathcal{D}(G),C}^\mathrm{sm}$. From \cite[Prop.~6.2.4]{conrad_prasad_2016} there is an isomorphism 
$$(\mathcal{D}(G) \rtimes Z_{\mathcal{D}(G),C})/C\overset{\sim}{\to}(\mathrm{Aut}^{\mathrm{sm}}_{\mathcal{D}(G)/K})^0,$$ 
where the semi-direct product is given by the natural action, and $C$ is embedded in it via the map $c \mapsto (c^{-1},\mathrm{int}(c))$. Furthermore, the action of $(\mathrm{Aut}_{\mathcal{D}(G)/K}^\mathrm{sm})^0$ on $\mathcal{D}(G)$ extends to $G$ (see \cite[Lem.~C.2.3]{conrad_prasad_2016}), and as \cite[Def.~C.2.4]{conrad_prasad_2016}, define a \emph{pseudo-inner form} of $G$ as a group obtained by twisting $G$ via a cocycle in the image of \[ H^1(K,(\mathrm{Aut}^{\mathrm{sm}}_{\mathcal{D}(G)/K})^0) \to H^1(K,\mathrm{Aut}_{G,K}), \] where $\mathrm{Aut}_{G,K}$ is the étale sheaf of automorphisms of $G$. 

\begin{prop}
\label{proposition quasi split pseudo inner form}
    Let $G$ be a smooth connected linear algebraic $K$-group. Suppose that ${\sdim(K)\leq 1}$. Then $G$ admits a quasi-split pseudo-inner form.
\end{prop}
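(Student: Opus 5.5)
The plan is to imitate the classical construction of the quasi-split inner form of a reductive group, replacing Borel subgroups by pseudo-Borel subgroups and inner automorphisms by the group $(\mathrm{Aut}^{\mathrm{sm}}_{\mathcal{D}(G)/K})^0$ of \S\ref{PseudoInnerForms}. Where the classical argument uses a splitting over $K$, I would instead use the hypothesis $\sdim(K)\leq 1$ to kill a lifting obstruction.

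First I reduce to the case where $G$ is pseudo-reductive. Every pseudo-parabolic subgroup contains ${R}_{u,K}(G)$, so pseudo-Borel subgroups of $G$ correspond bijectively to pseudo-Borel subgroups of $\overline{G}\coloneqq G/{R}_{u,K}(G)$. It therefore suffices to produce a quasi-split pseudo-inner form of $\overline{G}$ and to transport the twisting cocycle back to $G$. Concretely, I would check that the action of $(\mathrm{Aut}^{\mathrm{sm}}_{\mathcal{D}(\overline{G})/K})^0$ is induced by $\overline{G}$ and a Cartan subgroup, through the isomorphism of \cite[Prop.~6.2.4]{conrad_prasad_2016}, and hence lifts compatibly to $G$. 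This bookkeeping step should be routine.

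Now let $G$ be pseudo-reductive and set $A\coloneqq \mathrm{Aut}^{\mathrm{sm}}_{\mathcal{D}(G)/K}$ and $A^0$ its identity component. Over $K_s$, the group $G_{K_s}$ has a pseudo-Borel subgroup $B$ containing a maximal torus $T$. Following \cite[\S C.2]{conrad_prasad_2016}, the stabilizer $\Theta$ of a pinning-type datum $(B,T,\{X_a\})$ satisfies $A_{K_s}=A^0_{K_s}\rtimes\Theta$, with $\Theta$ étale mapping isomorphically onto $\pi\coloneqq A/A^0$.
\begin{enumerate}
\item Over $K$, the Galois action on $G_{K_s}$ determines a class $\xi\in H^1(K,\mathrm{Aut}_{G,K})$, and it projects to a class $\bar\xi\in H^1(K,\pi)$.
\item I aim to lift $\bar\xi$ to a cocycle valued in $\Theta$. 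Twisting the split pseudo-reductive model (or $G$ itself) by this cocycle gives a form $G_0$ for which the pinning datum, and in particular $B$, is Galois-stable. Hence $B$ descends to a pseudo-Borel $K$-subgroup of $G_0$, so $G_0$ is quasi-split.
\item Since $\xi$ and the class of $G_0$ have the same image in $H^1(K,\pi)$, a standard twisting argument shows that $G$ is obtained from $G_0$ by a cocycle valued in $A^0$ (of the twisted form). That is, $G$ is a pseudo-inner form of $G_0$. Pseudo-inner equivalence is symmetric, since $A^0$ twists to $A^0$ of the twisted group, so $G_0$ is a quasi-split pseudo-inner form of $G$.
\end{enumerate}

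The main obstacle is step 2. Over a general imperfect field, the semidirect product decomposition need not be defined over $K$ in a way compatible with the descent datum, because the pinning of $G_{K_s}$ need not be rational. I would handle this by phrasing step 2 as a lifting problem along the extension $1\to A^0\to A\to\pi\to1$. The obstruction to finding a lift whose twist preserves a pseudo-Borel lies in a nonabelian $H^2$ bound by the centre $Z$ of $A^0$ (or of the relevant Cartan quotient $C$). I would then show this $H^2(K,\cdot)$ vanishes under $\sdim(K)\leq1$, by the dévissage of Lemma~\ref{lemme h2 tame groups}. For the maximal multiplicative part, \cite[Prop.~4.6.2]{GilleConjII} gives the vanishing, and for the unipotent quotient, \cite[Lem.~3.3]{TossiciVistoli} does. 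Once every class in this $H^2$ is neutral, the lift exists, and steps 2 and 3 go through.
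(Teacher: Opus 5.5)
\textbf{There is a genuine gap, at the step that is supposed to produce the obstruction.}

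Your overall shape is the same as the paper's: reduce to the pseudo-reductive case, locate an obstruction in an $H^2$, and kill it using $\sdim(K)\leq 1$. The paper cites \cite[\S C.2.9]{conrad_prasad_2016} for the obstruction and \cite[Cor.~A.4]{Cesnavicius} for its vanishing. But your obstruction is attached to the wrong problem.
\begin{itemize}
\item The lifting problem along $1\to A^0\to A\to\pi\to 1$ is not the relevant one. The class $\bar\xi$ always lifts: $\xi$ itself is a lift, and relative to $G$ the class is even trivial. A lift to $H^1(K,A)$ carries no information about pseudo-Borel subgroups. So the nonabelian $H^2$ with band $A^0$, or $H^2$ of the centre of $A^0$, obstructs a problem that is already solved, and its vanishing says nothing about quasi-splitness.
\item The difficulty is not that a pinning fails to be rational. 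The classical argument needs neither a rational pinning nor a split model. A $K$-split model need not even exist for pseudo-reductive groups over imperfect fields. Relative to $G$ itself, your $\Theta$ is not a $K$-subgroup, so step 2 does not make sense as written.
\item One works with $G$ itself instead. For each $s\in\Gamma$, take the \emph{unique} $\phi_s\in A^0(K_s)$ carrying ${}^s(\text{pinning})$ to the pinning. Uniqueness forces $\phi_s\,{}^s\phi_t=\phi_{st}$. So if $A^0(K_s)$ acted simply transitively on a Galois-stable family of pinning-type data, there would be no obstruction and no hypothesis on $K$ would be needed.
\end{itemize}

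In the pseudo-reductive setting this rigidification is not available, which is why an $H^2$ appears at all. The correct obstruction is as follows.
\begin{itemize}
\item Fix a pseudo-Borel subgroup $B$ of $\mathcal{D}(G)_{K_s}$, a maximal torus $T\subset B$, and the Cartan subgroup $C=Z_{\mathcal{D}(G)_{K_s}}(T)$.
\item By conjugacy there is $\phi_s\in A^0(K_s)$ with $\phi_s({}^sB,{}^sT)=(B,T)$. But $\phi_s$ is unique only up to the stabilizer of $(B,T)$ in $A^0$. Under the isomorphism of \S\ref{PseudoInnerForms}, this stabilizer is the image of $(C\rtimes Z_{\mathcal{D}(G),C})/C\simeq Z_{\mathcal{D}(G),C}$, a commutative group.
\item Hence $(s,t)\mapsto \phi_s\,{}^s\phi_t\,\phi_{st}^{-1}$ is a $2$-cocycle with values in a $K$-form of this commutative group. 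The twisted Galois action on it is well defined precisely because the group is commutative.
\item If the class of this cocycle vanishes, you can correct $\phi$ into a $1$-cocycle of $A^0$. Twisting by it makes $(B,T)$ Galois-stable, which yields a pseudo-Borel $K$-subgroup.
\end{itemize}
This is the content of \cite[\S C.2.9]{conrad_prasad_2016}, which the paper cites.

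Once the obstruction is placed there, your vanishing argument is essentially fine. For a commutative affine $K$-group, dévissage into its maximal multiplicative subgroup and a unipotent quotient recovers what \cite[Cor.~A.4]{Cesnavicius} provides. Here the multiplicative part is a torus, whose $H^2$ vanishes when $\sdim(K)\leq 1$; this is not the finite $\mu$ handled in Lemma~\ref{lemme h2 tame groups}. The unipotent quotient is handled by \cite[Lem.~3.3]{TossiciVistoli}.
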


\begin{proof}
We can reduce the question to pseudo-reductive groups by \cite[Prop.~2.2.10]{conrad_gabber_prasad_2015}. If $G$ is pseudo-reductive, by \cite[\S C.2.9]{conrad_prasad_2016} we know that the existence of a quasi-split pseudo-inner form is equivalent to the triviality of the $H^2$ group of a certain pseudo-reductive commutative $K$-group. This $H^2$ group is trivial by \cite[Cor.~A.4]{Cesnavicius}.
\end{proof}

\begin{proof}[Proof of Theorem \ref{theorem pseudo-borel}]
By \cite[Prop.~2.2.10]{conrad_gabber_prasad_2015}, we can suppose that $G$ is pseudo-reductive. Since ${\sdim(K)\leq 1}$, $G$ admits a quasi-split pseudo-inner form $G^q$ by Proposition \ref{proposition quasi split pseudo inner form}, so it suffices to prove that $H^1(K,(\mathrm{Aut}_{\mathcal{D}(G^q)/K}^\mathrm{sm})^0) =1$. By dévissage, it holds as soon as $H^1(K,\mathcal{D}(G^q))$, $H^1(K,Z_{\mathcal{D}(G^q),C})$, and $H^2(K,C)$ are trivial, where $C$ is any Cartan subgroup of $\mathcal{D}(G^q)$. First, \cite[Proof of Prop.~C.2.8]{conrad_prasad_2016} yields $H^1(K,Z_{\mathcal{D}(G^q),C})=1$ since $\mathcal{D}(G^q)$ is quasi-split. Moreover, $H^2(K,C)=1$ by \cite[Cor.~A.4]{Cesnavicius}. Finally, $H^1(K,\mathcal{D}(G^q))=1$ thanks to Theorem \ref{theorem conjecture I pseudo semisimple}.
\end{proof}

Here is a direct consequence for Bruhat--Tits theory.

\begin{prop}
    \label{CoroBT}
    Let $G$ be a reductive group over a henselian discretely valued field $K$, quasi-split over the maximal unramified extension $K^\mathrm{unr}$. Let $\kappa$ be the residue field of $K$, and $q$ be its characteristic. Suppose that $\sdim(\kappa) \leq 1$ and that $[\kappa:\kappa^q] \leq q$ if $q\in \{2,3\}$. Then $G$ is residually quasi-split (i.e., the Bruhat--Tits building of $G_{K^\mathrm{unr}}$ admits a \mbox{${\mathrm{Gal}(K^\mathrm{unr}/K)}$-invariant} chamber).
\end{prop}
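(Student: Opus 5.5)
The plan is to reduce residual quasi-splitness to the quasi-splitness of a smooth connected linear group over the residue field $\kappa$, and then apply Theorem \ref{theorem pseudo-borel} to that group. I would use the standard Bruhat--Tits dictionary here. Write $\Gamma_\kappa=\mathrm{Gal}(K^\mathrm{unr}/K)=\mathrm{Gal}(\kappa_s/\kappa)$. Since $G_{K^\mathrm{unr}}$ is quasi-split, its building is well behaved, and by étale descent (valid over a henselian discretely valued field) there is a $\Gamma_\kappa$-stable facet, hence a $\Gamma_\kappa$-fixed point $x$ of the building of $G_{K^\mathrm{unr}}$. Let $F$ be the facet containing $x$. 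The associated parahoric group scheme $\mathcal{G}_F$ over $\mathcal{O}_{K^\mathrm{unr}}$ descends to $\mathcal{O}_K$. Its special fiber $\mathcal{G}_{F,\kappa}$ is a smooth connected affine $\kappa$-group.

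The key step is a dictionary: chambers of the building containing $F$ in their closure correspond $\Gamma_\kappa$-equivariantly to pseudo-Borel (equivalently minimal pseudo-parabolic) $\kappa_s$-subgroups of $\mathcal{G}_{F,\kappa_s}$. Here a chamber $C\supset F$ corresponds to the image in the special fiber of the parahoric of $C$. The reductive quotient $\mathcal{G}_{F,\kappa_s}/R_u(\mathcal{G}_{F,\kappa_s})$ is a reductive group, and the residual building at $F$ is its spherical building. Chambers through $F$ thus correspond to Borel subgroups of that reductive quotient, and pulling back gives the minimal pseudo-parabolic subgroups of $\mathcal{G}_{F,\kappa}$. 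These pullbacks contain $R_{u,\kappa}$ and are defined over $\kappa$ exactly when they are $\Gamma_\kappa$-stable, by \cite[Prop.~2.2.10]{conrad_gabber_prasad_2015} and Galois descent. A $\Gamma_\kappa$-invariant chamber containing $F$ therefore exists if and only if $\mathcal{G}_{F,\kappa}$ is quasi-split in the sense of \S\ref{pseudo-borels}. Conversely, any $\Gamma_\kappa$-invariant chamber is the desired object, so it suffices to produce one through $F$.

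The final step applies Theorem \ref{theorem pseudo-borel} to $\mathcal{G}_{F,\kappa}$ over $\kappa$. Its hypotheses are $\sdim(\kappa)\leq1$, together with $[\kappa:\kappa^q]\leq q$ in the case where the pseudo-reductive quotient is non-standard. Non-standardness can only occur when $q\in\{2,3\}$ (\S\ref{ClassificationPseudoSS}), and in that case the extra hypothesis is exactly the one assumed in the statement.

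I expect the main obstacle to be the dictionary step, specifically making precise the link between $\Gamma_\kappa$-stable chambers through $F$ and $\kappa$-pseudo-Borel subgroups of the possibly non-reductive special fiber. The issue is that Bruhat--Tits theory naturally speaks of parabolics of the reductive quotient over $\kappa_s$, not over $\kappa$. The approach would be to cite the Bruhat--Tits correspondence over the strictly henselian field $K^\mathrm{unr}$ and check $\Gamma_\kappa$-equivariance, then descend using the fact that $R_{u,\kappa}(\mathcal{G}_{F,\kappa})_{\kappa_s}$ sits inside $R_u(\mathcal{G}_{F,\kappa_s})$.
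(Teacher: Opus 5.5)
Your architecture matches the paper's: pass to a $\mathrm{Gal}(K^{\mathrm{unr}}/K)$-stable facet, identify residual quasi-splitness with quasi-splitness over $\kappa$ of a smooth connected affine $\kappa$-group attached to it, and apply Theorem~\ref{theorem pseudo-borel}. Your check of that theorem's hypotheses is correct, since non-standardness forces $q\in\{2,3\}$. The difference is that the paper does not prove the dictionary. It quotes the list of equivalent characterizations of residual quasi-splitness in \cite[Prop.~1.6.2]{macau_mods_00007771}, and one of these conditions is supplied by Theorem~\ref{theorem pseudo-borel}.

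Your own justification of the dictionary fails in exactly the new case, $\kappa$ imperfect. Over the imperfect field $\kappa_s$, the geometric unipotent radical of $\mathcal{G}_{F,\kappa_s}$ need not be defined over $\kappa_s$. The quotient by $R_{u,\kappa_s}$ is only pseudo-reductive. So there is no reductive $\kappa_s$-group whose spherical building is the link of $F$, and preimages of Borel subgroups of the geometric reductive quotient are not pseudo-parabolic $\kappa_s$-subgroups.

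For a concrete case, take a separable $L/K$ of degree $p$ with $e(L/K)=1$ and purely inseparable residue extension $\ell/\kappa$ of degree $p$, and let $G=\mathrm{R}_{L/K}(\mathrm{SL}_2)$. At a vertex, the special fiber of the parahoric over $\mathcal{O}_{K^{\mathrm{unr}}}$ is $\mathrm{R}_{\ell'/\kappa_s}(\mathrm{SL}_2)$ with $\ell'=\ell\otimes_\kappa\kappa_s$. This group is pseudo-reductive but not reductive. The edges through the vertex are indexed by $\mathbb{P}^1(\ell')$, matching its pseudo-Borel subgroups $\mathrm{R}_{\ell'/\kappa_s}(B)$. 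By contrast, the Borel subgroups of its geometric reductive quotient $\mathrm{SL}_{2,\overline{\kappa}}$ are indexed by $\mathbb{P}^1(\overline{\kappa})$, and their preimages have dimension $3p-1$ rather than $2p$.

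So the right statement is the first one you wrote. For $F\subset\overline{C}$, the image of the special fiber of $\mathcal{G}_C$ in $\mathcal{G}_{F,\kappa_s}$ is a pseudo-Borel $\kappa_s$-subgroup, and $C\mapsto$ (this image) is a $\Gamma_\kappa$-equivariant bijection onto the pseudo-Borel $\kappa_s$-subgroups. This is a genuine input from Bruhat--Tits theory over a strictly henselian field with imperfect residue field, and it is what the paper's citation supplies. It cannot be extracted from the classical reductive-quotient picture, which presupposes $\kappa_s=\overline{\kappa}$, i.e. $\kappa$ perfect, the classical case.

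Once you cite it, the rest of your argument is formal. A pseudo-Borel $\kappa$-subgroup of $\mathcal{G}_{F,\kappa}$, given by Theorem~\ref{theorem pseudo-borel}, base changes to a $\Gamma_\kappa$-stable pseudo-Borel over $\kappa_s$, hence to a $\Gamma_\kappa$-stable chamber through $F$. The only compatibility needed is that $R_{u,\kappa}$ commutes with separable base change; the geometric unipotent radical plays no role.
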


\begin{proof}
    We have by \cite[Prop.~1.6.2]{macau_mods_00007771} equivalent conditions for $G$ being residually quasi-split. Condition (5) is actually satisfied thanks to Theorem \ref{theorem pseudo-borel}.
\end{proof}

\section{Main results}
\label{section bijection h1}

Suppose that $\kdim(K) \leq 1$. This final section contains the main results of the paper. We establish a variant of Serre's Conjecture I for unirational algebraic $K$-groups. Moreover, for any smooth algebraic $K$-group $G$, we introduce a canonical morphism $G\rightarrow G'$, where $G'$ contains no nontrivial unirational subgroup, and show that it induces a bijection ${H^1(K,G) \overset{\sim}{\to} H^1(K,G')}$. Specifically, our unirational variant implies the injectivity part via a standard twisting argument, whereas the surjectivity is obtained by extending \cite[Thm.~3, III.2.4]{CohoGalois} (originally stated for perfect fields) to the imperfect setting using \cite{conrad_gabber_prasad_2015, conrad_prasad_2016}.

\begin{thm}[Serre's Conjecture I for unirational groups]
\label{theorem main result}
Assume that $\kdim(K) \leq 1$. Then, for all unirational algebraic $K$-groups $G$, we have $H^1(K,G)=1$.
\end{thm}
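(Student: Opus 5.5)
We chain the three reductions established above. Let $G$ be a unirational algebraic $K$-group with $\kdim(K)\leq 1$.

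\emph{Reduction to perfect groups.} By Proposition \ref{proposition h1 SolvableCase}, which needs only $\sdim(K)\leq 1$, the map $H^1(K,\mathcal{D}^\infty(G))\to H^1(K,G)$ is surjective. By iterating Lemma \ref{DGUnirat}, $\mathcal{D}^\infty(G)$ is unirational. It is also perfect. Hence it suffices to prove the theorem when $G$ is perfect.

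\emph{Reduction to the pseudo-semisimple case.} Assume $G$ is perfect and set $R\coloneqq R_{u,K}(G)$ and $\overline{G}\coloneqq G/R$. Since $[K:K^p]\leq p$, Theorem \ref{theorem unipotent radical unirational} shows that $R$ is unirational. Being unipotent, it is solvable, so $H^1(K,R)=1$ by Proposition \ref{proposition h1 SolvableCase}. We need more than this, because $R$ is not central. The exact sequence $H^1(K,R)\to H^1(K,G)\to H^1(K,\overline{G})$ gives triviality of the fibre over the base point. For a class $\xi\in H^1(K,G)$ mapping to the trivial class, the fibre through $\xi$ is governed by $H^1(K,{}_\xi R)$, where ${}_\xi R$ is the twist of $R$ by $\xi$. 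We therefore twist: if a cocycle $\xi$ maps to $1$ in $H^1(K,\overline{G})$, we may take $\xi$ with values in $R(K_s)$. Then ${}_\xi G$ is a twist of $G$ by an inner cocycle, and ${}_\xi R=R_{u,K}({}_\xi G)$ is a twist of $R$ by a cocycle valued in $R$ itself.

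To conclude, we only need the conclusion for $\xi$ itself, namely that $\xi$ is the image of a class in $H^1(K,R)$. Exactness of $H^1(K,R)\to H^1(K,G)\to H^1(K,\overline{G})$ at the middle term already gives this, since the fibre over the trivial class is exactly the image of $H^1(K,R)$. As $H^1(K,R)=1$, the map $H^1(K,G)\to H^1(K,\overline{G})$ has trivial kernel, and it suffices to show $H^1(K,\overline{G})=1$.

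\emph{The pseudo-semisimple quotient.} The group $\overline{G}$ is pseudo-reductive, and it is perfect as a quotient of the perfect group $G$, so it is pseudo-semisimple. We have $\sdim_l(K)\leq 1$ for every prime $l$, in particular for all $l\in S(\overline{G})$. Moreover $[K:K^p]\leq p$, so the non-standard case is also covered. Theorem \ref{theorem conjecture I pseudo semisimple} therefore gives $H^1(K,\overline{G})=1$, and hence $H^1(K,G)=1$.

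\emph{Main obstacle.} The substantive input is Theorem \ref{theorem unipotent radical unirational}, which is already proved. At the assembly stage, the only point needing care is that a trivial kernel suffices, as explained in the second step. We also need the classification in \S\ref{ClassificationPseudoSS}, on which Theorem \ref{theorem conjecture I pseudo semisimple} relies, to be available when $p=2$. This holds because $[K:K^2]\leq 2$ is part of the hypothesis $\kdim(K)\leq 1$.
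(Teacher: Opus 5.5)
Your proposal is correct and follows essentially the same route as the paper: reduce to the perfect case via Proposition~\ref{proposition h1 SolvableCase} (using Lemma~\ref{DGUnirat} for unirationality of $\mathcal{D}^\infty(G)$), then use Theorem~\ref{theorem unipotent radical unirational} and dévissage to pass to the pseudo-semisimple quotient, and conclude with Theorem~\ref{theorem conjecture I pseudo semisimple}. The twisting discussion in your second step is unnecessary; as you end up noting, triviality of $H^1(K,R)$ gives a trivial kernel, and together with $H^1(K,\overline{G})=1$ this is exactly the dévissage the paper uses.
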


\begin{proof}
Since $G$ is unirational, Proposition \ref{proposition h1 SolvableCase} implies that $H^1(K,G)$ is trivial as soon as $H^1(K,\mathcal{D}^\infty(G))$ is, so we may assume that $G$ is perfect. Moreover, by Theorem \ref{theorem unipotent radical unirational}, we know that ${R}_{u,K}(G)$ is unirational, so $H^1(K,{R}_{u,K}(G))=1$ by Proposition \ref{proposition h1 SolvableCase} again, as ${R}_{u,K}(G)$ is solvable. By dévissage, $H^1(K,G)$ is trivial if $H^1(K,G/{R}_{u,K}(G))$ is, so we can assume that ${R}_{u,K}(G)=1$ and that $G$ is pseudo-semisimple. Theorem \ref{theorem conjecture I pseudo semisimple} concludes.
\end{proof}

\subsection{Homogeneous spaces}
\label{subsection theorem serre}
Given a smooth algebraic group $G$ over $K$, a \emph{$G$-homogeneous space over $K$} is a nonempty smooth $K$-scheme of finite type endowed with an action of $G$ that is transitive on the $K_s$-points. 

\begin{thm}
\label{theorem surjective h1}
Assume that $\sdim(K) \leq 1$ and let $G$ be a smooth algebraic $K$-group. Then every $G$-homogeneous space $X$ is dominated by a $G$-torsor, that is, there exists a $G$-torsor $\widetilde{X}$ and a $G$-equivariant map $\widetilde{X} \to X$.
\end{thm}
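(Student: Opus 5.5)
We follow the strategy of \cite[III.2.4, Thm.~3]{CohoGalois}, replacing Borel subgroups by pseudo-Borel subgroups. Pick $x\in X(K_s)$ with stabilizer $H=\mathrm{Stab}_G(x)$, an algebraic $K_s$-subgroup of $G_{K_s}$, not necessarily smooth. The obstruction to $X$ being dominated by a torsor is a nonabelian $H^2$-class: the Springer class of the $K$-kernel (or $K$-band) $L_X$ determined by $H$ and the Galois action on $X$. Concretely, $X$ is dominated by a $G$-torsor if and only if the $\mathrm{Gal}(K_s/K)$-stable $G(K_s)$-orbit of $x$ descends compatibly, i.e., this gerbe is neutral. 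If some $y\in X(K_s)$ has a stabilizer $H'$ that is defined over $K$ as a subgroup of $G$ and the gerbe of liftings is neutral, then $G/H'\simeq X$ and $G\to G/H'$ is the required torsor. So the plan is to shrink the band until its $H^2$ visibly vanishes.

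First, reductions. Replacing $H$ by $H^{\mathrm{sm}}$ does not change $K_s$-points or the orbit structure (\S\ref{largestsmsubgrp}), and it commutes with separable base change. We may therefore assume the relevant band is represented by smooth $K_s$-groups. Second, we pass from $H$ to $H^0$: the finite étale quotient $H/H^0$ is handled by replacing $X$ with the homogeneous space $G/H^0$-type cover, exactly as in Serre's argument, using that the kernel is then connected smooth. The key is Serre's lemma: if $L$ is a band whose underlying group is smooth connected linear, and $\sdim(K)\le 1$, then every class in $H^2(K,L)$ is neutral. This is proved by repeatedly passing to a canonical characteristic subgroup that carries the band. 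Take $R_{u,K_s}$, then the quotient; for the pseudo-reductive part take a pseudo-Borel subgroup. Pseudo-Borel subgroups exist over $K_s$ and are all $H(K_s)$-conjugate by \cite[Thm.~C.2.5]{conrad_gabber_prasad_2015}, and their normalizer is themselves, so the band restricts to a band on a solvable group $B$. Then take a Cartan subgroup $C$ of $B$, which is again canonical up to conjugacy with controlled normalizer. Finally, for nilpotent $C$ we iterate through the derived/central series down to commutative groups.

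For commutative smooth groups $A$, the band is a genuine Galois module, and one needs $H^2(K,A)=0$ for the relevant forms. For tori this is \cite[Prop.~4.6.2]{GilleConjII}. For unipotent groups it is \cite[Lem.~3.3]{TossiciVistoli}. For general commutative groups containing a torus with unipotent quotient, we use dévissage or \cite[Cor.~A.4]{Cesnavicius}. At each stage the neutrality of the band on the subgroup implies the neutrality on the bigger group via the standard map $H^2(K,L_{\mathrm{sub}})\to H^2(K,L)$, which is surjective on neutral classes when the subgroup's normalizer equals itself and all such subgroups are conjugate.

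The main obstacle is the pseudo-reductive step. We need the \emph{conjugacy} of pseudo-Borels under $H(K_s)$ and the identity $N_H(B)=B$ in the imperfect setting, so that the band genuinely descends to $B$. We expect \cite[Prop.~3.5.7 and Thm.~C.2.5]{conrad_gabber_prasad_2015} to supply exactly this. A secondary subtlety is the nonsmooth or disconnected stabilizer, which we handle by the $H^{\mathrm{sm}}$ reduction and the finite component group step before running the dévissage.
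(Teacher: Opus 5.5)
Your band-theoretic framework repackages the compatible-pair argument that the paper adapts from Serre. For connected \emph{linear} stabilizers your pseudo-Borel dévissage is the right one, but two steps fail as written.

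First, the statement allows any smooth algebraic $K$-group $G$, not necessarily linear. The stabilizer $H^0\subset G_{K_s}$ (after passing to $(-)^{\mathrm{sm}}$) then need not be affine. Already when $G$ is commutative and $X$ is a torsor under $G/A$ for an abelian subvariety $A\subset G$, the obstruction is the image of $[X]\in H^1(K,G/A)$ in $H^2(K,A)$. Your key lemma is formulated only for bands on smooth connected linear groups, and the pseudo-Borel theory of \cite{conrad_gabber_prasad_2015} does not apply to a non-affine $H^0$. Your list of commutative cases (tori, unipotent groups, their extensions) never meets abelian or pseudo-abelian varieties. The missing idea is the one the paper uses:
\begin{itemize}
\item take the largest smooth connected linear subgroup $L\subset H^0$, so that $H^0/L$ is a pseudo-abelian variety;
\item run the pseudo-Borel argument inside $L$ only (conjugacy under $L(K_s)$ by \cite[Thm.~C.2.5]{conrad_gabber_prasad_2015}, and $N_L(B)=B$), which forces $L=B$;
\item invoke \cite[Thm.~2.1]{Totaro} (pseudo-abelian varieties are commutative) to get $H^0$ solvable.
\end{itemize}
The final abelianization step then needs $H^2(K,C)=0$ for a commutative $K$-group $C$ that may be non-linear and disconnected. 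This is what \cite[Cor.~A.4]{Cesnavicius} supplies.

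Second, your passage from $H$ to $H^0$ is circular. The cover $H^0\backslash G_{K_s}\to H\backslash G_{K_s}$ descends to a $K$-homogeneous space over $X$ only if one can modify $a_s$ by elements of $H(K_s)$ so that $a_s\,{}^s a_t\,a_{st}^{-1}\in H^0(K_s)$ for all $s,t\in\Gamma$. That is, the induced class with band on the finite group $H/H^0$ must already be neutral. When $G$ is finite étale, this is the entire theorem.

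Serre's argument does not build such a cover. As in the paper, one takes an $l$-Sylow subgroup $S$ of $H/H^0$ with preimage $B$ and uses Sylow conjugacy to replace $a$ by $ha$, so that $(N_H(B)^{\mathrm{sm}},ha)$ is still compatible. Minimality then forces every Sylow subgroup to be normal, so $H/H^0$ is nilpotent and is killed by the abelianization step. Alternatively, $\sdim(K)\le 1$ gives $\mathrm{cd}(\Gamma)\le 1$, so $\Gamma$ is projective and every class with finite band is neutral. Some such argument must be supplied.

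Relatedly, each reduction step does not need $H^2(K,L_{\mathrm{sub}})\to H^2(K,L)$ to be ``surjective on neutral classes''. It needs every class with band $L$ to come from a class with band on the normalizer of the chosen subgroup, which is exactly the Frattini-type modification $a\mapsto ha$. Finally, your Cartan-subgroup step is superfluous once $H$ is known to be solvable.
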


The proof is identical to that of \cite[III.2.4, Thm.~3]{CohoGalois}, upon taking the largest smooth subgroup $(-)^\mathrm{sm}$ where appropriate and replacing Borel subgroups with pseudo-Borel subgroups (i.e. solvable pseudo-parabolic subgroups), see \S \ref{largestsmsubgrp}, \S \ref{pseudo-parabolics} and \S \ref{pseudo-borels}.

\begin{proof}
Fix a point $x_0 \in X(K_s)$ and consider the pairs $(H,a)$, called \emph{compatible pairs}, consisting of a smooth $K_s$-subgroup $H \subset G_{K_s}$ and a continuous map $a \colon \Gamma \to G(K_s)$ satisfying the properties: 
\begin{enumerate}
\item \label{1} for all $h \in H(K_s)$, $x_0 \cdot h = x_0$;
\item \label{2} for all $s \in \Gamma$, ${}^s x_0 = x_0 \cdot a_s$;
\item \label{3} for all $s$, $t \in \Gamma$, $a_s \, {}^s a_t \, a_{st}^{-1} \in H(K_s)$;
\item \label{4} for all $s \in \Gamma$, $a_s \, {}^s H \, a_s^{-1} = H$ as subgroups of $G_{K_s}$.
\end{enumerate}
Compatible pairs exist: take $H\coloneqq{H'}^\mathrm{sm}$, where $H'$ is the stabilizer of $x_0$ in $G_{K_s}$, and take for $a$ a continuous map such that ${}^s x_0 = x_0 \cdot a_s$ for all $s \in \Gamma$, which exists by homogeneity. 
\medskip

Let $(H,a)$ be a compatible pair with $H$ of minimal dimension. Let us show that $H=1$. This would imply that $a$ is a $1$-cocycle of $G$ defining a $G$-torsor that dominates $X$.

First of all, let us prove that the neutral component $H^0$ of $H$ is solvable. Let $L$ be the largest smooth connected linear algebraic $K_s$-subgroup of $H^0$. It is such that the quotient $H^0/L$ is a pseudo-abelian variety (see \cite[2.1.2.(4)]{KesBouSca}). Choose a pseudo-Borel subgroup $B$ of $L$. By definition, there exists a cocharacter $\lambda$ such that $B={R}_{u,K_s}(L)\, P_L(\lambda)$. For all $s \in \Gamma$, we have ${}^s B= {R}_{u,K_s}({}^s L)\, P_{{}^s L}({}^s \lambda)$, so $a_s \,{}^s B\, a_s^{-1}$ is also a pseudo-Borel of $a_s \,{}^sL \,a_s^{-1} = L$ (as subgroups of $H$). But by \cite[Thm.~C.2.5]{conrad_gabber_prasad_2015}, all pseudo-Borel subgroups of $L$ are conjugated by $L(K_s)$, so there exists $h_s \in L(K_s)$ such that $a_s \,{}^s B\, a_s^{-1} = h_s^{-1} \,B\, h_s$. We can choose the $h_s$ such that $h\colon\Gamma \to G(K_s)$, $s \mapsto h_s$ is continuous, so that we get a compatible pair $({N}_H(B)^\mathrm{sm},ha)$. Points $(1)$, $(2)$, and $(4)$ are clear enough. To check $(3)$, let $h_{s,t}' \coloneqq h_s \, a_s \, {}^s(h_t a_t) \, (h_{st} a_{st})^{-1}$
for all $s$, $t \in \Gamma$. The same calculations as in the proof of \cite[III.2.4, Lem.~3]{CohoGalois} show that $h_{s,t}' \in {N}_H(B)(K_s)$. Since ${N}_H(B)(K_s)={N}_H(B)^{\mathrm{sm}}(K_s)$, this shows $(3)$. Now, since $H$ is minimal, necessarily $H={N}_H(B)^\mathrm{sm}$, so $H={N}_H(B)$ and $L={N}_{L}(B)$. As $B$ is its own normalizer in $L$ by \cite[Prop.~3.5.7]{conrad_gabber_prasad_2015}, we get $L=B$. Hence, $H^0$ is an extension of a pseudo-abelian variety (which is commutative by \cite[Thm.~2.1]{Totaro}) by $B$, which is solvable. We conclude that $H^0$ is solvable.

Let us see next that $H$ itself is solvable. In view of the previous point, it is enough to show that $H/H^0$ is solvable. Let $S \subset (H/H^0)(K_s)$ be a $l$-Sylow subgroup seen as an algebraic subgroup of $H/H^0$. Write $B$ for the inverse image of $S$ in $H$. The group $Q=a_s \, {}^s B \, a_s^{-1}$ lies inside $H$ and its image $\overline{Q}$ in $H/H^0$ is another $l$-Sylow subgroup: by Sylow's theorems there exists $h_s \in H(K_s)$ such that $\overline{Q} = \overline{h_s} \, S \, \overline{h_s}^{-1}$, thus $Q= h_s^{-1} \, B \, h_s$. We can choose $h\colon s\mapsto h_s$ so that it is continuous. By doing the same verifications as above, we see that $({N}_H(B)^\mathrm{sm},ha)$ is a compatible pair. Hence $H={N}_H(B)$ and $H/H^0 = {N}_{H/H^0}(S)$. Every Sylow subgroup of $H/H^0$ being normal, we get that $H/H^0$ is a product of finite $l$-groups for several primes $l$, which are all solvable.

We finally show that $H$ is actually perfect. For every $s \in \Gamma$, the $s$-semilinear automorphism (see \cite[\S (1.2)]{FSS}) ${}^s G \to G$, $g \mapsto a_s \, {}^s g \, a_s^{-1}$ of $G$ restricts to a scheme isomorphism $\tau_s \colon {}^s H \to H$. The inverse $\rho_s$ of $\tau_s$ passes to the quotient by $\mathcal{D}(H)$ to yield an $s$-semilinear automorphism $\overline{\rho_s}$ of $H^\mathrm{ab}$. The collection of the $\overline{\rho_s}$'s defines a map $\overline{\rho} \colon \Gamma \to \mathrm{SAut}(H^\mathrm{ab}/K)$, where $\mathrm{SAut}(H^\mathrm{ab}/K)$ is the group of semilinear automorphism of $H^\mathrm{ab}$. From the definition of compatible pairs, $\overline{\rho}$ is also continuous (in the sense of \cite[Def.~(1.10)]{FSS}) and is a group homomorphism. Thus $\overline{\rho}$ defines a $K$-group structure on $H^\mathrm{ab}$ by \cite[Rmk.~(1.15)]{FSS}. Denote by $C$ the $K$-descent of $H^\mathrm{ab}$ defined by $\overline{\rho}$. For all $s$, $t \in \Gamma$, write $h_{s,t} = a_s {}^s a_t a_{st}^{-1} \in H(K_s)$, 
and $\overline{h_{s,t}}$ for its image in $H^\mathrm{ab}(K_s)=C(K_s)$.
The exact same calculations as in the proof of \cite[III.2.4, Lem.~5]{CohoGalois} imply that $(\overline{h_{s,t}})_{s,t}$ is a $2$-cocycle of $C$. However, $H^2(K,C)=1$ because of \cite[Cor.~A.4]{Cesnavicius}. Again, as in the proof of \cite[III.2.4, Lem.~5]{CohoGalois}, we build $a' \colon \Gamma \to G(K_s)$ such that $(\mathcal{D}(H),a')$ is a compatible pair, proving that $H=\mathcal{D}(H)$.

We conclude that $H$ is solvable and perfect, so $H=1$ and $a$ is a $1$-cocycle of $G$.
\end{proof}

\begin{coro}
\label{theorem surjective h1 strongly wound}
Assume that $\sdim(K) \leq 1$ and let $f\colon G\rightarrow G'$ be a surjective map of smooth algebraic $K$-groups. Then $f$ induces a surjection \[ H^1(K,G) \to H^1(K,G'). \]
\end{coro}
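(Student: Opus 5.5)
Let $\xi \in H^1(K,G')$ be represented by a $G'$-torsor $P'$. The plan is to realize $P'$ as a $G$-homogeneous space and then apply Theorem~\ref{theorem surjective h1}.

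First, let $G$ act on $P'$ on the right through $f$, i.e.\ $x\cdot g \coloneqq x\cdot f(g)$. Then $P'$ is a nonempty smooth $K$-scheme of finite type, since it is a twisted form of the smooth group $G'$. Over $K_s$, the torsor $P'$ has a point $x_0$, and $P'(K_s)=x_0\cdot G'(K_s)$.

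Next, we need transitivity on $K_s$-points, which amounts to $f(G(K_s))=G'(K_s)$. This is the delicate point: surjectivity of $f$ as a map of fppf sheaves does not immediately give surjectivity on $K_s$-points.
\begin{itemize}
\item Let $N=\ker f$. Then $f$ factors as $G\to G/N\simeq G'$.
\item For $y\in G'(K_s)$, the fibre $f^{-1}(y)$ is an $N_{K_s}$-torsor over $K_s$.
\item If $N$ is smooth, this torsor is trivial because $K_s$ is separably closed, so $y$ lifts.
\end{itemize}
In general $N$ need not be smooth, and then $f(G(K_s))$ could be a proper subgroup. For this reason I would not insist on transitivity on $K_s$-points. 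Instead I would rerun the argument of Theorem~\ref{theorem surjective h1} directly with the compatible-pair formalism, starting from $x_0\in P'(K_s)$ and a continuous map $a\colon \Gamma\to G(K_s)$ with ${}^s x_0 = x_0\cdot f(a_s)$.

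Such a lift $a$ exists as long as the cocycle $b_s$ defined by ${}^s x_0 = x_0\cdot b_s$ takes values in $f(G(K_s))$. This may fail when $N$ is non-smooth. For example, Frobenius $\GG_a\to\GG_a$ is surjective on $K_s$-points, whereas for $\mu_p$-kernels it may not be. So my first attempt would be a reduction to the case where $f(G(K_s))=G'(K_s)$. Alternatively, one could use the convention that ``surjective'' means that $G(K_s)\to G'(K_s)$ is onto, or that $\ker f$ is smooth (I expect the intended reading includes this). Under that hypothesis $P'$ is a $G$-homogeneous space, and Theorem~\ref{theorem surjective h1} gives a $G$-torsor $P$ with a $G$-equivariant map $\varphi\colon P\to P'$.

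Finally, the $G$-equivariant map $P\to P'$ induces a $G'$-equivariant morphism $P\times^{G}G'\to P'$, where $P\times^{G}G'$ is the pushforward torsor $f_*[P]$. A $G'$-equivariant morphism of $G'$-torsors is an isomorphism, so $f_*[P]=[P']=\xi$. Concretely, $a$ is a $1$-cocycle with ${}^s x_0=x_0\cdot f(a_s)$, and $f(a)$ is cohomologous to the cocycle of $P'$. This gives surjectivity of $H^1(K,G)\to H^1(K,G')$.

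The main obstacle is the homogeneity step when $\ker f$ is non-smooth. If needed, I would handle it by noting that Theorem~\ref{theorem surjective h1}'s proof only uses point (2) of compatible pairs, i.e.\ the existence of a continuous lift $a$. I would then supply that lift by passing to $G\to G/(\ker f)^{\mathrm{sm}}$, using the vanishing of the smooth-kernel part of the obstruction over $K_s$.
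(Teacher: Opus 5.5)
Your main argument (let $G$ act on a $G'$-torsor $P'$ through $f$, apply Theorem~\ref{theorem surjective h1}, and note that the resulting $G'$-equivariant map $P\times^{G}G'\to P'$ is an isomorphism of torsors) is exactly the paper's one-line proof. It is complete whenever $G(K_s)\to G'(K_s)$ is onto, e.g.\ when $\ker f$ is smooth.

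You are right that this condition is needed for $P'$ to be a homogeneous space in the sense of \S\ref{subsection theorem serre}. The paper uses it tacitly, and it does hold in the only application, Theorem~\ref{corollary bijective h1 strongly wound}, where the kernel $G^{\mathrm{uni}}$ is smooth. Your illustration is wrong, though: Frobenius $\mathbb{G}_a\to\mathbb{G}_a$ is \emph{not} surjective on $K_s$-points for imperfect $K$, since $K_s^p\neq K_s$. So an $\alpha_p$-kernel fails exactly as a $\mu_p$-kernel does.

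For arbitrary surjective $f$, however, your proposal does not prove the statement, and the repair in your last paragraph achieves nothing. Put $N=\ker f$, $\bar G=G/N^{\mathrm{sm}}$ and $M=N/N^{\mathrm{sm}}$.
\begin{enumerate}
\item Since $N^{\mathrm{sm}}$ is smooth, $G(K_s)\to\bar G(K_s)$ is onto. Hence $G$ and $\bar G$ have the \emph{same} image in $G'(K_s)$, and passing to $\bar G$ produces no new lifts $a$.
\item Since $M(K_s)=1$, $\bar G(K_s)$ injects into $G'(K_s)$. A $G'$-cocycle admitting a lift $a$ is therefore literally a $\bar G$-cocycle.
\item Consequently, ``supplying the lift'' is precisely the surjectivity of $H^1(K,\bar G)\to H^1(K,G')$, i.e.\ the claim itself for $\bar G\to G'$. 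Once that is known, Theorem~\ref{theorem surjective h1} is only needed for the smooth kernel $N^{\mathrm{sm}}$.
\end{enumerate}

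The missing ingredient is a vanishing result for the obstruction to lifting $G'$-torsors to $\bar G$-torsors. If $M$ is commutative, this obstruction lies in $H^2(K,{}_{P'}M)$. That group vanishes when $\sdim(K)\le 1$: split the commutative affine group ${}_{P'}M$ into its multiplicative and unipotent parts. The solvable case then follows by d\'evissage along the derived series of $M$, whose terms are normal in $\bar G$. For non-solvable $M$, such as Frobenius kernels of non-solvable groups, neither your proposal nor the paper's one-line argument supplies what is needed.
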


\begin{proof}
 Every $G'$-torsor can be seen as a $G$-homogeneous space, thus is dominated by a $G$-torsor by Theorem \ref{theorem surjective h1}.
\end{proof}

\subsection{Strongly wound quotient}
\label{subsection strongly wound quotient}
Let $G$ be a smooth algebraic $K$-group. There is a largest unirational $K$-subgroup $G^{\mathrm{uni}}$ of $G$ by \cite[2.1.2.(9)]{KesBouSca}. It is affine, smooth, connected, and normal in $G$. If $[K:K^p] \leq p$, then the quotient $G/G^{\mathrm{uni}}$ has no nontrivial unirational subgroup. Indeed, if $H \subset G/G^{\mathrm{uni}}$ is unirational, then the inverse image of $H$ by the quotient homomorphism $G \to G/G^{\mathrm{uni}}$ is unirational by \cite[Thm.~2.4]{Rosengarten_extuni}, thus $H=1$.

Suppose now that $G$ is also affine and connected. If $G^\mathrm{uni}=1$, we say that $G$ is \emph{strongly wound}. In this case, it is necessarily wound unipotent because it cannot contain a nontrivial torus nor a split unipotent group. If $[K:K^p] \leq p$, the group $G/G^{\textrm{uni}}$ is strongly wound, and the map $G \to G/G^{\textrm{uni}}$ is then called the \emph{strongly wound quotient} of $G$.

\begin{thm}
\label{corollary bijective h1 strongly wound}
Assume that $\kdim(K) \leq 1$ and let $G$ be a smooth algebraic $K$-group. Then the quotient homomorphism $G \to G/G^{\mathrm{uni}}$ induces a bijection $$H^1(K,G) \overset{\sim}{\rightarrow} H^1(K,G/G^{\mathrm{uni}}).$$ 
\end{thm}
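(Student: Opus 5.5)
Surjectivity follows from Corollary \ref{theorem surjective h1 strongly wound}, applied to the surjective homomorphism of smooth algebraic groups $f\colon G\to G/G^{\mathrm{uni}}$. This uses only $\sdim(K)\leq 1$. The real work is injectivity, which I will obtain by the standard twisting argument of \cite[I.5.5]{CohoGalois}, combined with Theorem \ref{theorem main result}.

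Set $N\coloneqq G^{\mathrm{uni}}$, a normal subgroup of $G$. Take a cocycle $a\in Z^1(K,G)$ and twist by it. The fibre of $H^1(K,G)\to H^1(K,G/N)$ containing the class of $a$ is then in bijection with the image of $H^1(K,{}_aN)\to H^1(K,{}_aG)$. Here ${}_aN$ is the twist of $N$ by $a$, with $G$ acting on $N$ by conjugation. It therefore suffices to prove that $H^1(K,{}_aN)=1$ for every cocycle $a$.

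Over $K_s$, the twisted form ${}_aN$ becomes isomorphic to $N_{K_s}$. By \cite[Thm.~1.6]{rosengarten2024rigidityunirationalgroups}, $K$-unirationality of an algebraic group can be checked after a separable extension. So it is enough to know that $N_{K_s}$ is $K_s$-unirational, which is immediate from the unirationality of $N$ by base change. Hence ${}_aN$ is a unirational algebraic $K$-group. Since $\kdim(K)\leq 1$, Theorem \ref{theorem main result} gives $H^1(K,{}_aN)=1$. Thus every fibre is a singleton, and the map is injective.

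The main point to check is that unirationality survives the twist. The twist ${}_aN$ is an inner-type form of $N$ coming from the conjugation action of $G$, not just a Galois-twisted copy of $N$, so its unirationality is not formal over $K$. I intend to handle this entirely through the separable-descent criterion for unirationality cited above.

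The twisting formalism also needs $G/N$ to make sense as a smooth algebraic group with $N$ normal, so that the exact sequence in nonabelian cohomology applies. This holds because $N=G^{\mathrm{uni}}$ is a smooth connected normal affine subgroup, as recalled in \S\ref{subsection strongly wound quotient}. The argument does not use connectedness or linearity of $G$.
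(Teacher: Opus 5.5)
Your proposal is correct and follows essentially the same route as the paper. Surjectivity comes from Corollary \ref{theorem surjective h1 strongly wound}. Injectivity comes from the classical twisting argument: every twist of $G^{\mathrm{uni}}$ is $K_s$-unirational, hence $K$-unirational by Rosengarten's separable-extension criterion, so its $H^1$ vanishes by Theorem \ref{theorem main result}.
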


\begin{proof}
By Corollary \ref{theorem surjective h1 strongly wound}, the map $H^1(K,G) \to H^1(K,G/G^{\mathrm{uni}})$ is surjective. Furthermore, every twist of $G^{\mathrm{uni}}$ is \mbox{$K_s$-unirational}, hence it is also {$K$-unirational} as recalled in $\S$\ref{paragraphUnirat}. Thus its $H^1$ is trivial by Theorem \ref{theorem main result}. The classical twisting argument proves the desired injectivity. 
\end{proof}

\begin{coro}
In the setting of Theorem \ref{corollary bijective h1 strongly wound}, suppose furthermore that the identity component $G^0$ is unirational. then the quotient homomorphism $G \to G/G^0$ induces a bijection
$$H^1(K,G) \to H^1(K,G/G^0).$$
\end{coro}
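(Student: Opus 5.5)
The plan is to run the same argument as in Theorem \ref{corollary bijective h1 strongly wound}, with the normal subgroup $G^{\mathrm{uni}}$ replaced by $G^0$. The first step is to show that $G \to G/G^0$ induces a surjection on $H^1$. The group $G^0$ is a smooth normal subgroup of $G$, and $G/G^0$ is a finite étale $K$-group, so $G\to G/G^0$ is a surjective map of smooth algebraic $K$-groups. Since $\kdim(K)\leq 1$ implies $\sdim(K)\leq 1$, Corollary \ref{theorem surjective h1 strongly wound} gives surjectivity of $H^1(K,G)\to H^1(K,G/G^0)$.

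The second step is injectivity, by the classical twisting argument (\cite[I.5.5]{CohoGalois}). Fix a class $\xi\in H^1(K,G)$ represented by a cocycle $a$. The fibre of $H^1(K,G)\to H^1(K,G/G^0)$ through $\xi$ is in bijection with the quotient of $H^1(K,{}_aG^0)$ by an action of $H^0(K,{}_a(G/G^0))$. Here ${}_aG^0$ is the twist of $G^0$ by $a$ acting through inner automorphisms of $G$, which preserve $G^0$ because $G^0$ is normal. It therefore suffices to show that $H^1(K,{}_aG^0)=1$ for every such cocycle $a$.

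The third step, which is the only nonformal point, is to check that ${}_aG^0$ is unirational. Over $K_s$ we have $({}_aG^0)_{K_s}\cong G^0_{K_s}$. The group $G^0_{K_s}$ is $K_s$-unirational, being the base change of the unirational $K$-group $G^0$. By \cite[Thm.~1.6]{rosengarten2024rigidityunirationalgroups}, recalled in \S\ref{paragraphUnirat}, unirationality can be checked after a separable extension, so ${}_aG^0$ is $K$-unirational. Theorem \ref{theorem main result} then gives $H^1(K,{}_aG^0)=1$. So every fibre is a singleton, and the map is bijective.

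An alternative route is to note that $G^0$ unirational forces $G^0\subset G^{\mathrm{uni}}$. Since $G^{\mathrm{uni}}$ is connected, this gives $G^0=G^{\mathrm{uni}}$, and the claim is then literally Theorem \ref{corollary bijective h1 strongly wound}. This is the cleanest proof, and I would present it first, keeping the twisting argument above as the justification underlying that theorem. I do not expect a genuine obstacle. The only point to verify is that $G^{\mathrm{uni}}$ is connected and contains every unirational subgroup, which holds by its definition in \S\ref{subsection strongly wound quotient}.
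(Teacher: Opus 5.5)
Your proposal is correct and matches what the paper intends. The paper states this corollary without proof as an immediate consequence of Theorem~\ref{corollary bijective h1 strongly wound}: as you observe, $G^0\subset G^{\mathrm{uni}}$ because $G^0$ is unirational, and $G^{\mathrm{uni}}\subset G^0$ because $G^{\mathrm{uni}}$ is connected, so $G^0=G^{\mathrm{uni}}$ and the statement is literally that theorem (your first route is just the theorem's proof with $G^0$ in place of $G^{\mathrm{uni}}$, and is equally valid).
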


\bibliographystyle{alpha}
\bibliography{bibliography.bib}

\end{document}